\newtheoremstyle{nonum}{}{}{\itshape}{}{\bfseries}{.}{ }{\thmnote{#3}}
\newtheorem{thm}{Theorem}[section]
\newtheorem{cor}[thm]{Corollary}
\newtheorem{lem}[thm]{Lemma}
\newtheorem{prop}[thm]{Proposition}
\newtheorem{rem}[thm]{Remark}
\newtheorem{definition}[thm]{Definition}
\theoremstyle{nonum}
\newcommand{\R}{\mathbb R}
\newcommand{\RR}{\mathbb R}
\def\L{{\cal L}}
\def\P{{\cal P}}
\newcommand{\iprod}[2]{\langle #1,#2 \rangle} 
\def\calL{{\cal L}}
\def\L{{\calL}}
\def\ra{\rightarrow}
\def\del{\partial}
\def\Cvx{{\rm Cvx}}
\def\SCvx{{\rm SCvx}}
\def\fpolar{{f^\circ}}
\def\grad{{\nabla}}
\def\MA{{Monge--Amp\`ere }}
\def\Lf{{f^\star}}
\def\Pf{{f^\circ}}
\def\Jf{{f^{\circ\star}}}
\def\pa{\partial}
\def\epi{{\rm epi}\,}
\def\Im{{\rm Im}\,}
\def\dom{{\rm dom}\,}
\def\h#1{\hbox{#1}}
\def\psg{\del^\circ}
\def\pg{\nabla^\circ}
\def\eps{{\varepsilon}}
\def\dom{{\rm dom}}
\def\int{{\rm int}}
\begin{document}
 \title
 {Analysis
 of polarity}
 \author{Shiri Artstein-Avidan and Yanir A. Rubinstein }
 \maketitle
\date

\begin{abstract}
We develop a differential theory for the polarity transform
parallel to that of the Legendre transform,
which is applicable when the functions studied
are ``geometric convex'', namely convex,
non-negative and vanish at the origin.
This analysis may be used to
solve a family of first order equations reminiscent
of Hamilton--Jacobi and conservation law equations,
as well as  some second order \MA type equations. A special case of the latter,
that we refer to as the homogeneous polar Monge--Amp\`ere equation, 
gives rise to a canonical method of interpolating between convex functions.
\end{abstract}

\section{Introduction}

The Legendre transform $\L$, 
introduced by Mandelbrojt and Fenchel, is a classical
operation mapping functions on $\R^n$ to convex
lower-semi-continuous functions. It has numerous applications in
many areas of mathematics, in physics and in economics. Restricted
to convex lower-semi-continuous functions, it is an involution and
on twice differentiable convex function satisfies 
\begin{equation}
\label{LegendreIdentitiesEq}
\nabla\Lf =
(\nabla f)^{-1}, 
\h{\ \ and\ \ } 
\nabla^2 f|_x = (\nabla^2 \Lf)^{-1}|_{\nabla f(x)},
\end{equation} 
where we denote $\Lf:=\L f$. These properties lead to the
classical fact that $\L$ can be used to solve various first- and
second-order equations, in particular equations of conservation
laws, Hamilton--Jacobi equations, 
and \MA equations. 

Our main focus in this article is another duality transform $\P$,
called {\it polarity}.
Our main goal here is to develop a differential theory
for $\P$. We introduce the notion of a polar subdifferential for a function, and analyze its properties. The analysis turns out to be more delicate than the corresponding analysis for $\L$, due to the more non-linear nature of this transform. We further identify a wide class of convex functions for which second order analysis can be developed.   
As applications of this analysis
we are led to introduce certain PDEs
that are natural analogues of the classical Hamilton--Jacobi, conservations law and
Monge-Amp\`ere equations. These can solved  by the polarity transform. They provide new processes
for interpolation between convex functions.

Due to the ubiquitous role of the Legendre transform, the results here naturally raise the possibility of
deriving many other parallel constructions and applications for
polarity. The differential analysis of polarity we initiate here can be
seen as a first step in this direction.
Further generalizations, applications, and interpretation in terms of affine differential geometry will be considered
elsewhere.

This article is organized as follows. After deriving some basic
identities for polars of nonnegative functions in Section
\ref{PolarityNonNegSec},
Section \ref{PolaritySubDiffSec} is concerned with the
basic sub-differential theory for polarity.
Here the polar subgradient is defined and some of its
basic properties are studied. Section \ref{PolaritySecondOrderSec}
computes the Hessian under polarity. In Section \ref{PolarityVariationSec} we
compute the first- and second-variation formulas for families of polars.
Sections \ref{FirstOrderSec}--\ref{SecondOrderSec} derive the canonical
Hamilton--Jacobi and \MA type equations associated to polarity. In Section \ref{sec:Ginf} we derive some explicit formulas for these solutions.

\section{Polars of nonnegative functions}\label{PolarityNonNegSec}

Recently it was shown \cite{AM2} that the
Legendre--Mandelbrojt--Fenchel transform \cite{Fe,Ma}
\begin{equation}\label{eq:deflegendre}
 \Lf(y)\equiv (\L f)(y)  = \sup_{x \in \R^n}
\left(\langle x, y\rangle - f(x)\right), \end{equation}
and polarity \cite[\S15]{R}
\begin{equation}\label{eq:defpolaritytrans}
\Pf(y)\equiv (\P f)(y) = \inf\{ c\ge 0\,:\, \langle x,y\rangle\le 1+
cf(x), \forall x\in\RR^n\},
\end{equation}
are essentially the {\em only} order reversing involutions on the
class
$$
\Cvx_0(\R^n):=\{f \h{ convex and lower semi-continuous on $\RR^n$,\ } f\ge 0, \; f(0)=0\},
$$
referred to as the class of ``geometric convex functions."
We denote by
$\Cvx(\R^n)$ the set of lower semi-continuous convex functions
$f: \R^n \to \R \cup \{ + \infty\}$.
Note that functions in $\Cvx_0(\RR^n)$ are
always proper and closed in the terminology of Rockafellar
\cite{Ro}. The domain of a function in $\Cvx(\R^n)$ is defined to be the (convex)
set on which it attains finite values, and is denoted $\dom(f)$.
Let us remark that the notation in the present
article clashes somewhat with that in \cite{AM2}.

The epigraph of a function is defined as the set
\[ \epi f = \{
(x,y) \in \R^n \times \R : f(x) \le y\}, 
\] 
Note that a function $f$
belongs to $\Cvx_0(\R^n)$ if and only if the epigraph is a closed
convex set containing $\{0\} \times \R^+$ and
contained in the half-space $\R^n \times \R^+$.

\begin{lem}\label{lem:PropertiesofA}
Let $f$ be a non-negative function. (i) Then  $\Pf\in \Cvx_0(\RR^n)$
and
\begin{equation}
\label{GeomDualityDef}
\fpolar(y)
=
\begin{cases}
\sup_{ x\not\in f^{-1}(0) } \frac{\langle x,y\rangle -
1}{f(x)}, &\quad \hbox{for\ \ $0\not=y \in \{x\,:\, f(x) =
0\}^{\circ}$}, \cr 0&\quad y=0, \cr +\infty &  \quad \h{otherwise. }
\end{cases}
\end{equation}
(ii) The double polar of $f$ is the convex 
envelope,
\begin{equation}\label
{DoubleDualEq}
f^{\circ\circ}=f^{\star\star}=\sup\{g\in\Cvx_0(\RR^n)\,:\, g\le f\}\le f.
\end{equation}
(iii) The epigraph of $\Pf$ is the reflection with respect to $\R^n
\times \{0\} \subset \R^{n+1}$ of the polar of  the epigraph of $f$.
\end{lem}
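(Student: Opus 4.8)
The plan is to prove the three parts roughly in the order (i), (iii), (ii), since the epigraph description in (iii) gives a clean geometric picture that makes (ii) transparent, while (i) is the computational heart that I would do first by directly massaging the definition \eqref{eq:defpolaritytrans}.

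For part (i), I would start from $\Pf(y)=\inf\{c\ge 0 : \langle x,y\rangle\le 1+cf(x)\ \forall x\}$ and split the universally quantified inequality according to whether $f(x)=0$ or $f(x)>0$. On the set $f^{-1}(0)$ the constraint reads $\langle x,y\rangle\le 1$, which for $y\ne 0$ is exactly the condition $y\in\{x:f(x)=0\}^\circ$ (and if $y=0$ everything is trivial, giving $\Pf(0)=0$); if this fails, no finite $c$ works and $\Pf(y)=+\infty$. On the complement, for each $x\notin f^{-1}(0)$ the constraint $\langle x,y\rangle\le 1+cf(x)$ is equivalent to $c\ge \frac{\langle x,y\rangle-1}{f(x)}$, so the least admissible $c$ is the supremum of these lower bounds (a supremum over the empty family, or a nonpositive supremum, just yields $\Pf(y)=0$, consistent with $\Pf\ge 0$). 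This yields formula \eqref{GeomDualityDef}. Then $\Pf\in\Cvx_0$ follows because $\Pf$ is a supremum of affine-in-$y$ functions (hence convex and lower semicontinuous), is manifestly $\ge 0$, and vanishes at $0$; alternatively this will also drop out of part (iii). I expect the only mildly delicate point here is the bookkeeping of degenerate cases (empty domain of $f$, the behavior when $f^{-1}(0)$ is larger than $\{0\}$, supremum over an empty set), but none of these is deep.

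For part (iii), I would use the characterization quoted just before the lemma: $g\in\Cvx_0(\R^n)$ iff $\epi g$ is a closed convex set with $\{0\}\times\R^+\subseteq \epi g\subseteq \R^n\times\R^+$. Writing $K=\epi f\subseteq\R^{n+1}$ and letting $R$ denote reflection in the hyperplane $\R^n\times\{0\}$, I claim $\epi\Pf = R(K^\circ)$, where $K^\circ=\{(u,s): \langle u,x\rangle + s t\le 1\ \forall (x,t)\in K\}$ is the polar body. The computation is: $(y,c)\in R(K^\circ)$ iff $(y,-c)\in K^\circ$ iff $\langle y,x\rangle - ct\le 1$ for all $(x,t)\in\epi f$; since $f(x)\le t$ can be taken arbitrarily large when $c\ge 0$, and the binding case is $t=f(x)$, this is exactly $\langle y,x\rangle\le 1+cf(x)$ for all $x\in\dom f$, with $c\ge 0$ — which says precisely $c\ge\Pf(y)$, i.e. $(y,c)\in\epi\Pf$. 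Here one must check that the sign constraint $c\ge 0$ is automatically enforced (it is, because $(0,t)\in K$ for all $t\ge 0$ forces $s\le 0$ for $(u,s)\in K^\circ$) and that closedness/convexity of $\epi\Pf$ matches that of $R(K^\circ)$; this re-proves $\Pf\in\Cvx_0$.

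Finally, for part (ii), I would argue at the level of epigraphs. Applying (iii) twice, $\epi f^{\circ\circ} = R((R(K^\circ))^\circ) = R(R(K^{\circ\circ}))=K^{\circ\circ}$, using that reflection commutes with polarity up to the same reflection, i.e. $(R(A))^\circ = R(A^\circ)$. Since $K=\epi f$ with $f$ nonnegative, $K$ lies in the upper half-space and contains $\{0\}\times\R^+$, so $K^{\circ\circ}=\overline{\co}(K\cup\{0\})=\overline{\co}(K)$ is the closed convex hull of $\epi f$, whose lower boundary is exactly the convex envelope $\sup\{g\in\Cvx_0:g\le f\}$; thus $f^{\circ\circ}$ equals that envelope and is $\le f$. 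The identity $f^{\circ\circ}=f^{\star\star}$ then follows because the Legendre biconjugate $f^{\star\star}$ is the closed convex envelope of $f$ in $\Cvx(\R^n)$, and for a nonnegative function vanishing at the origin that envelope already lies in $\Cvx_0(\R^n)$, so it coincides with the $\Cvx_0$-envelope. The main obstacle I anticipate is making the reflection-commutes-with-polarity step and the "lower boundary of the convex hull = convex envelope" step fully rigorous in the presence of unbounded epigraphs and recession directions, i.e. checking that no closure issues or lost vertical rays spoil the identification; everything else is essentially a direct unwinding of definitions.
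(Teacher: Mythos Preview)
Your proposal is correct and your approach to (i) is essentially the paper's, but you take a genuinely different route for (ii) and (iii), and in the opposite logical order.

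The paper proves (ii) before (iii): for (ii) it argues directly that $f^{\circ\circ}\le f$ via the defining infimum, then uses that $\P$ is order-reversing and an involution on $\Cvx_0$ to sandwich $f^{\circ\circ}$ between the closed convex envelope of $f$ and $f$ itself, forcing equality because $f^{\circ\circ}\in\Cvx_0$ by (i). For (iii) it then simply cites Rockafellar for the case $f\in\Cvx_0$ and reduces the general case to this one via $f^{\circ\circ\circ}=f^\circ$, which (ii) supplies. Your route is more geometric and self-contained: you prove (iii) by a direct epigraph computation (no citation needed), and then deduce (ii) from (iii) together with the bipolar theorem for sets and the identity $(R(A))^\circ=R(A^\circ)$. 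What your approach buys is an explicit, elementary verification of (iii) and a uniform treatment of (ii) via set polarity; what the paper's approach buys is a very short proof of (ii) that uses only the order-reversing involution property, without ever computing an epigraph.

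Two small cautions. First, your claim ``$(0,t)\in K$ for all $t\ge 0$'' presumes $f(0)=0$, which is not assumed; the conclusion $c\ge 0$ still follows, but from the weaker fact that $\epi f$ contains \emph{some} vertical ray (any $x_0\in\dom f$ works). Second, the identity $f^{\circ\circ}=f^{\star\star}$ really does need $f^{\star\star}(0)=0$; you flag this yourself (``for a nonnegative function vanishing at the origin''), and the paper's own formulation has the same implicit restriction, so this is not a defect of your argument relative to theirs.
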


\begin{proof}
If $f\equiv0$ then both
\eqref{eq:defpolaritytrans} and \eqref{GeomDualityDef} give $\Pf=1^c_{\{0\}}$.
So assume that $f\not\equiv 0$.\\
(i) First observe that by \eqref{eq:defpolaritytrans}, $\Pf(0)=0$.
If $f\in\Cvx_0(\RR^n)$, the first line of \eqref{GeomDualityDef} already
implies $\Pf(0)=0$ since $f$ is unbounded.
For general $f\ge 0$,
\[
\sup_{ x\not\in f^{-1}(0) }\frac{\langle
x,y\rangle - 1}{f(x)} = \inf \{ c:  {\langle x,y\rangle }\le 1 +
c{f(x)}\,\, \forall x\not\in f^{-1}(0)\}.
\]
If $0\not=y\in (f^{-1}(0))^{\circ}$ then for all $x\in f^{-1}(0)$ we have that $
{\langle x,y\rangle }\le 1$ and
\[
\begin{aligned}
\sup_{ x\not\in f^{-1}(0) }\frac{\langle
x,y\rangle - 1}{f(x)}
& =
\inf \{ c:  {\langle x,y\rangle }\le 1 + c{f(x)}\,\, \forall x \in \R^n\}
\cr
& =
\inf \{ c\ge0:  {\langle x,y\rangle }\le 1 + c{f(x)}\,\, \forall x \in \R^n\},
\end{aligned}
\]
since for some $x$ with ${\langle x,y\rangle }> 1$ we have $f(x)>0$
(as $f \not\equiv 0$).
If $y\not\in (f^{-1}(0))^{\circ}$,
then there exists some $x$ with $f(x) = 0$ and with ${\langle
x,y\rangle }> 1$, thus by
(\ref{eq:defpolaritytrans}), $\Pf(y)=
+\infty$,
in agreement with (\ref{GeomDualityDef}).
Finally, to see that $\Pf\in \Cvx_0(\RR^n)$, it only remains to show
that it is convex. If $f$ is unbounded, then as already noted
the first line of \eqref{GeomDualityDef} already implies that $\Pf(0)=0$,
and then $\Pf$ is a supremum of linear functionals and
$1^c_{(f^{-1}(0))^\circ}$, hence convex.
Finally, if $f$ is bounded then $\Pf=1^c_{\{0\}}$.
\\
(ii) By the classical properties of the Legendre transform \cite{Ro}
it suffices to show the first
equality in \eqref{DoubleDualEq}. First, if $f\in\Cvx_0(\RR^n)$,
$f^{\star\star}=f^{\circ\circ}=\h{cl}\, f = f$, where $\epi\h{cl}\,
f=\overline{\epi f}$ \cite{Ro}. Observe now that $f^{\circ\circ}\le
f$, indeed:
$$
\begin{aligned}
f^{\circ\circ}(x)
& =
\inf\{c^\star\ge0\,:\, \langle x ,z\rangle\le 1
            +c^\star
\Pf(z), \;\forall z\}
\cr
& \le
\inf\{c^\star\ge0\,:\, \langle x,z\rangle\le 1
            +c^\star\frac{\langle x,z\rangle-1}{f( x)}, \;\forall z\}
= f(x).
\end{aligned}
$$
Since clearly $\P$ is order reversing, we see that $\h{cl}\, f \le f$ implies 
$(\h{cl}\, f)^{\circ} \ge f^{\circ}$ and 
$\h{cl}\, f = (\h{cl}\, f)^{\circ \circ} \le f^{\circ\circ} \le f$. 
However,  $f^{\circ\circ}$ is closed by (i) and thus must equal $\h{cl}\, f$. 
\\
(iii)
The statement holds for $f\in\Cvx_0(\RR^n)$ essentially from
the definition \eqref{eq:defpolaritytrans} \cite[p. 137]{Ro}.
In general, by  (i) $\Pf\in\Cvx_0(\RR^n)$, and so by (ii) $f^{\circ\circ\circ}=f^\circ$,
implying the desired statement.
\end{proof}

The previous lemma
recovers well-known properties of polars of non-convex sets. 
Let $K$ be a set in $\RR^n$, and let $K^{\circ}$ denote its polar, 
given by
$$
K^{\circ} = \{ y \in \R^n : \langle x, y \rangle \le 1, \;\; \forall
x \in K\}.
$$
For a closed convex set $K$ let $1^c_K$  
denote
the convex indicator function, equal to $0$ on $K$ and
$+\infty$ elsewhere. 
Then $\P 1^c_K =
1^c_{K^{\circ}}$.

One more useful fact is that for $f\in \Cvx_0(\R^n)$ we have that 
\begin{equation}\label{eq:domdual} \overline{\dom (\Pf)} = \{x:  f(x) = 0\}^{\circ}
{\rm ~~~and~~~} 
 \overline{\dom (f)} =    \{y: \Pf (y) = 0\}^{\circ}.\end{equation}
Indeed, a closed convex set $K$ satisfies that $K = \overline{\dom(f)}$ if and only if $f\ge 1^c_K$ and $f\not\ge 1^c_{K'}$ for any closed $K'\supsetneq K$. 
Similarly, $\{ f = 0\} = T$ if and only if $f\le 1^c_T$ and $f\not \le 1^c_{T'}$ for any $T'\subsetneq T$. Since polarity on $\Cvx_0(\RR^n)$ is an involution which changes order and replaces $1^c_K$ by $1^c_{K^{\circ}}$, the claim follows. 

Next we briefly discuss the 
composition of $\P$ and $\L$. It is not hard to check that the two transformations
commute, and thus the composition is an involution on $\Cvx_0(\R^n)$
which is order preserving. We list two of its properties which shall be useful in the sequel.

\begin{lem}\label{lem:JDualEq}
Let $f\in \Cvx_0(\R^n)$ and $x$ with $f(x) \neq 0, +\infty$.
If $f$ is differentiable at the origin, we have that
\begin{equation}\label{JDualEq} f(x)\Jf(x/f(x))=1.
\end{equation}
Moreover, the above conclusion holds whenever $f|_{[0,x(1+\delta)]}$
is not linear for any $\delta>0$.
\end{lem}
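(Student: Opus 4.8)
The plan is to reduce the claimed identity to a one‑variable statement about the perspective function $\lambda\mapsto\lambda f(x/\lambda)$, exploiting that $\L$ and $\P$ commute on $\Cvx_0(\R^n)$. Since $\Lf\in\Cvx_0(\R^n)$ as well (it is closed convex, $\Lf\ge\Lf(0)=0$), we may write $\Jf=f^{\circ\star}=\L\P f=\P\L f=\P(\Lf)$, and we recall $(\Lf)^\star=f$ by Fenchel--Moreau. Put $t:=f(x)$; then $0<t<\infty$ and $x\neq 0$ (because $f(0)=0$). Feeding $\Lf$ and the point $x/t$ into the definition \eqref{eq:defpolaritytrans} and multiplying the defining inequality through by $t>0$ gives
\begin{equation*}
\Jf(x/t)=\inf\bigl\{c\ge 0:\ \langle z,x\rangle\le t+ct\,\Lf(z)\ \text{ for all }z\in\R^n\bigr\}.
\end{equation*}
Since $\Lf\ge 0$, the set of admissible $c$ is upward closed; hence it suffices to decide, for $c>0$, when the constraint holds, and then to note that $c=0$ cannot be admissible once some $c\in(0,1/t)$ fails.

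For $c>0$ set $\lambda=ct>0$. The condition ``$\langle z,x\rangle-\lambda\Lf(z)\le t$ for all $z$'' is exactly $(\lambda\Lf)^\star(x)\le t$, and $(\lambda\Lf)^\star(x)=\lambda\,(\Lf)^\star(x/\lambda)=\lambda f(x/\lambda)$. Thus everything reduces to the function $\psi(\lambda):=\lambda f(x/\lambda)$ on $(0,\infty)$, and precisely to showing $\{\lambda>0:\psi(\lambda)\le t\}=[1,\infty)$. One direction is soft: for $\lambda\ge 1$, convexity of $f$ with $f(0)=0$ gives $f(x/\lambda)\le t/\lambda$, i.e.\ $\psi(\lambda)\le t$; and the same monotonicity of the ratio $s\mapsto f(sx)/s$ gives $\psi(\lambda)\ge t$ for $\lambda\in(0,1)$. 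The real content is upgrading the latter to a \emph{strict} inequality, which is where the non‑linearity hypothesis enters.

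So the heart of the matter is: $f(sx)>s\,f(x)$ for every $s>1$. Suppose instead $f(s_0x)=s_0t$ for some $s_0>1$. Since $s\mapsto f(sx)/s$ is non‑decreasing (as $f(0)=0$) and equals $t$ at $s=1$ and at $s=s_0$, it equals $t$ on $[1,s_0]$, so $f(\sigma x)=\sigma t$ for $\sigma\in[1,s_0]$. Now the affine map $\sigma\mapsto\sigma t$ is simultaneously the secant of $f\big|_{\R x}$ through the graph points at $\sigma=1,s_0$ and through those at $\sigma=0,1$; convexity forces $f$, along $\R x$, to lie above the first secant outside $[1,s_0]$ (in particular above $\sigma\mapsto\sigma t$ on $[0,1]$) and below the second on $[0,1]$ (i.e.\ below $\sigma\mapsto\sigma t$ there). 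Hence $f(\sigma x)=\sigma t$ for all $\sigma\in[0,s_0]$, i.e.\ $f$ is linear on $[0,x(1+\delta)]$ with $\delta=s_0-1>0$ --- contrary to hypothesis. Therefore $\psi(\lambda)>t$ for $\lambda\in(0,1)$, so $\{\lambda>0:\psi(\lambda)\le t\}=[1,\infty)$; combined with the preceding paragraph, the admissible set of $c$ meets $(0,\infty)$ exactly in $[1/t,\infty)$, and by upward‑closedness it equals $[1/t,\infty)$, whence $\Jf(x/t)=1/t$ and $f(x)\Jf(x/f(x))=1$. Finally, if $f$ is only assumed differentiable at the origin (with $f(x)\neq 0$), then $f$ cannot be linear on $[0,x]$: otherwise the directional derivative of $f$ at $0$ in the direction $-x$ would be $-\langle\nabla f(0),x\rangle=-f(x)<0$, impossible since $0$ minimizes $f\ge 0$; a fortiori $f$ is not linear on any $[0,x(1+\delta)]$, and the previous case applies.

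The main obstacle I expect is precisely the strict inequality $\psi(\lambda)>t$ for $\lambda<1$: the non‑strict version is automatic, but strictness genuinely fails for functions that are affine along a segment slightly longer than $[0,x]$ (e.g.\ $f=1^c_K$‑type functions, or $f(z)=\max(\langle v,z\rangle,0)$), which is exactly why the two hypotheses are imposed; the secant‑line argument above is what converts the equality case into full linearity on $[0,x(1+\delta)]$. The only other delicate point is the endpoint $c=0$ and the $0\cdot\infty$ convention in the definition of $\P$, which the upward‑closedness remark disposes of without any analysis of $\dom\Lf$.
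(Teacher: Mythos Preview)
Your argument is correct. The key reduction to the one-variable identity $\{\lambda>0:\lambda f(x/\lambda)\le f(x)\}=[1,\infty)$ via $(\lambda\Lf)^\star(x)=\lambda f(x/\lambda)$ is clean and complete; the secant argument turning the equality case $f(s_0x)=s_0f(x)$ into full linearity on $[0,s_0x]$ is exactly the place where the hypothesis bites, and your handling of the $c=0$ endpoint and the differentiability-at-the-origin case are both fine.

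The route, however, is genuinely different from the paper's. The paper works order-theoretically: it sandwiches $f$ (after reducing to one dimension) between two elementary functions, $\max(1^c_{[0,x]},l_{f(x)/x})$ from above and a shifted half-line $\max(0,\frac{f(x)}{x-w}(t-w))$ from below (the existence of the latter, with $w>0$, being precisely the non-linearity assumption), and then applies the order-preserving transform $\calL\P$ to these building blocks, whose images are computed explicitly. Your proof instead computes $\Jf=(\Lf)^\circ$ directly from the definition of $\P$ and reduces everything to the monotonicity and strict monotonicity of the perspective $\lambda\mapsto\lambda f(x/\lambda)$. The paper's approach advertises the lattice structure of $\Cvx_0$ and the action of $\calL\P$ on generators, which pays dividends elsewhere in the paper; your approach is more self-contained, needs no catalogue of transforms, and makes the connection to classical Fenchel calculus (the scaling identity $(\lambda g)^\star(\cdot)=\lambda g^\star(\cdot/\lambda)$) explicit.
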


\begin{proof}
We will prove this lemma using mainly the order-preservation property of $\L\P$,
together with our knowledge on how it acts on simple functions. Indeed, by the properties above
it is enough to consider
functions in $\Cvx_0(\R^+)$. Clearly $f \le \max (1^c_{[0,x]},
l_{\frac{f(x)}{x}})$, where $l_c$ denotes the function $l_c(t) =
ct$. Since $(1^c_{[0,x]})^{\circ\star} = l_{1/x}$ and $l_c^{\circ\star}
= 1^c_{[0,1/c]}$ we get that
\[ f^{\star\circ} \le \max (1^c_{[0,x/f(x)]}, l_{1/x}),\]
so that $f^{\star\circ}(x/f(x)) \le 1/f(x)$. Next, we use the
assumption that $f$ has a supporting functional at $x$ which is {\em
not} the linear function $l_{x/f(x)}$. Denote this support function
by $\frac{f(x)}{x-w}(t-w)$, and let $h(t) = \max(0,
\frac{f(x)}{x-w}(t-w))\le f$. Then $\Jf\ge h^{\star\circ}$ which is
easily computed to be $\max(0, \frac{1}{w}(t - \frac{x-w}{f(x)}))$.
In particular we have $f^{\star\circ}(x/f(x)) \ge
h^{\star\circ}(x/f(x)) = 1/f(x)$. The proof of the lemma is
complete. \end{proof}

\begin{rem}
{\rm
 In the  case not covered in
Lemma \ref{lem:JDualEq},  the product in (\ref{JDualEq}) can still be computed.
Indeed, a function $f\in \Cvx_0(\R^n)$ is linear on some interval $[0,y]$, if and only if
the mapping $x\mapsto x/f(x)$ is not injective. Assume
that $f(ty) = ct$ for $0\le t\le 1$, and that $f$ is not linear on any extension
of $[0,y]$. Then $\Jf|_{\R^+y}$ is supported on $[0,y/f(y)]$,
and the value it attains on $y/f(y) = ty/f(ty)$ is $1/f(y)$. Thus
for any $0<t\le 1$,
\[ f(ty)\Jf(ty/f(ty))=t.\]
}
\end{rem}

The second propoerty of $\Jf$ which we shall need is a geometric description, which will help us investigate how properties such as smoothness and strict convexity are affected by this transformation $f\mapsto \Jf$. 
Define the mapping $F: \RR^n \times (0, \infty)  \to \RR^n \times (0, \infty)$ by 
\[F(x_1, \ldots, x_n, t) = (x_1/t, \ldots, x_n/y, 1/t).\]
The following was shown in \cite{AM2}:
\begin{lem}\label{lem:JisF}
Let $f\in Cvx_0(\RR^n)$, then  
\[{\rm int}({\epi} (\Jf )) = F({\rm int}({\epi} (f))).\]
\end{lem}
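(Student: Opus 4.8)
\noindent The plan is to reduce the statement to one explicit formula expressing $\Jf$ in terms of the \emph{perspective} (homogenization) of $f$, read off an epigraph identity on the half-space $\R^n\times(0,\infty)$, and then pass to interiors. \emph{Step 1 (a perspective formula for $\Jf$).} Since $\P$ and $\L$ commute, I would write $\Jf=\P(\Lf)$ and feed $\Lf$ into the definition \eqref{eq:defpolaritytrans}: for $c>0$, the constraint $\langle z,y\rangle\le 1+c\,\Lf(z)$ for all $z$ is equivalent to $\sup_z\big(\langle z,y\rangle-c\,\Lf(z)\big)\le 1$, i.e. to $c\,f^{\star\star}(y/c)\le 1$, and since $f\in\Cvx_0(\R^n)$ is closed and convex we have $f^{\star\star}=f$. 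This should yield, for $y\neq 0$,
\[
\Jf(y)=\inf\{c>0\ :\ c\,f(y/c)\le 1\},
\]
with $\inf\emptyset=+\infty$, while $\Jf(0)=0$; in particular $y\in\dom(\Jf)$ exactly when the displayed set is nonempty. A small point to check is that $c=0$ is feasible in the infimum only when $y=0$, so for $y\neq 0$ one may indeed restrict to $c>0$.

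\emph{Step 2 (the constraint set is an upper half-line, and the epigraph identity).} Next I would record that, for fixed $y\neq 0$, the function $c\mapsto c\,f(y/c)$ on $(0,\infty)$ is non-increasing: since $f(0)=0$ and $f$ is convex, for $0<c_1<c_2$ the point $y/c_2$ lies on the segment from $0$ to $y/c_1$, so $f(y/c_2)\le \tfrac{c_1}{c_2}f(y/c_1)$ and hence $c_2 f(y/c_2)\le c_1 f(y/c_1)$. This function is also lower semicontinuous on $(0,\infty)$ because $f$ is, so $\{c>0:\ c\,f(y/c)\le 1\}$ is closed in $(0,\infty)$ and upward closed, hence equals $[\Jf(y),\infty)$ (read as $(0,\infty)$ when $\Jf(y)=0$, and as $\emptyset$ when $\Jf(y)=+\infty$). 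Consequently, for every $t>0$,
\[
(y,t)\in\epi(\Jf)\iff \Jf(y)\le t\iff t\,f(y/t)\le 1\iff f(y/t)\le 1/t\iff F(y,t)\in\epi(f),
\]
and the same equivalence holds trivially for $y=0$. Since $F$ is an involution of $\R^n\times(0,\infty)$, this gives
\[
\epi(\Jf)\cap\big(\R^n\times(0,\infty)\big)=F\Big(\epi(f)\cap\big(\R^n\times(0,\infty)\big)\Big).
\]

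\emph{Step 3 (pass to interiors).} Finally I would use that $F$ is a homeomorphism (indeed a diffeomorphism) of the open set $U:=\R^n\times(0,\infty)$ onto itself, together with the elementary fact that ${\rm int}(\epi g)\subseteq U$ for any non-negative $g$ (an interior point $(x,s)$ of $\epi g$ satisfies $(x,s-\varepsilon)\in\epi g$ for small $\varepsilon>0$, whence $s>g(x)\ge 0$). Applying this to $g=f$ and to $g=\Jf\in\Cvx_0(\R^n)$, and the fact that ${\rm int}(A)={\rm int}(A\cap U)$ whenever ${\rm int}(A)\subseteq U$ and $U$ is open, the identity of Step 2 yields
\[
{\rm int}(\epi\Jf)={\rm int}\big(\epi(\Jf)\cap U\big)={\rm int}\big(F(\epi(f)\cap U)\big)=F\big({\rm int}(\epi(f)\cap U)\big)=F\big({\rm int}(\epi f)\big),
\]
where the second equality is the identity of Step 2, the first and last use ${\rm int}(\epi g)\subseteq U$, and the third uses that $F|_U$ is a homeomorphism. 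This is the assertion of the lemma.

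\emph{Where the difficulty lies.} Conceptually the argument is short; the care is in two places. First, the interchange in Step 1 is legitimate not by a saddle-point theorem but because, after dividing by $c$, the inner supremum is exactly a Legendre biconjugate, which collapses to $f$ for $f\in\Cvx_0(\R^n)$; one must also track the degenerate cases $y=0$ and $y\notin\dom(\Jf)$. Second, in Step 3 it is essential that $F$ is only a self-homeomorphism of $\R^n\times(0,\infty)$ and not of all of $\R^{n+1}$, so both epigraphs must first be confined to that half-space; and one should allow $\dom(f)$ (hence $\dom(\Jf)$) to be lower-dimensional, in which case both interiors are empty and the formulas are vacuously consistent. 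An alternative route is to combine Lemma~\ref{lem:PropertiesofA}(iii), which realises $\epi(\Pf)$ as a reflected polar of $\epi(f)$, with the analogous description of the Legendre transform on epigraphs, and compose the two; the perspective-function route above seems more transparent.
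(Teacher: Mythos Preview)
Your argument is correct. The paper itself does not prove this lemma: it is stated with the attribution ``The following was shown in \cite{AM2}'' and used as a black box. So there is no ``paper's own proof'' to compare against; you have supplied a self-contained one.

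Your route via the perspective formula $\Jf(y)=\inf\{c>0:\,c\,f(y/c)\le 1\}$ is clean and efficient. The key steps --- collapsing the inner supremum to the biconjugate in Step~1, the monotonicity/lower-semicontinuity argument in Step~2 giving the pointwise epigraph equivalence on $\R^n\times(0,\infty)$, and the passage to interiors in Step~3 using that $F$ is a self-homeomorphism of this half-space --- are all sound. The only cosmetic remark is that in Step~2 the monotonicity of $c\mapsto c\,f(y/c)$ should be understood in the extended sense (allowing $+\infty$), which your convexity argument covers since $0\in\dom(f)$ forces $\dom(f)$ to be star-shaped about the origin; you might make this explicit. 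The alternative route you mention at the end (composing the epigraph descriptions of $\L$ and $\P$ from Lemma~\ref{lem:PropertiesofA}(iii)) is closer in spirit to how the result is presented in \cite{AM2}, but your perspective-function argument is arguably more direct.
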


\begin{rem}{\rm
The mapping $F$ is ``fractional linear''(sometimes called ``projective linear''), and in particular maps segments to segments and subdomains of affine $k$-dimensional subspaces to subdomains of affine $k$-dimensional subspaces. In particular, if $f\in Cvx_0(\RR^n)$ is strictly convex outside of $\{f= 0\}$ then $\Jf$ is strictly convex outside $\{\Jf = 0\}$, and if $f$ is differentiable in $\dom(f)\setminus \{f= 0\}$ then $\Jf$ is differentiable in $\dom(\Jf) \setminus \{\Jf = 0\}$.  \\
Note, however, that even for $f$ which is everywhere differentiable and strictly smooth, the function $\Jf$ may have a non-zero set $\{\Jf = 0\}$ and may fail to be differentiable at the boundary of this set.
}
\end{rem}

The following definition will be of use to us in the sequel. 
\begin{definition}
Let $f\in \Cvx_0(\RR^n)$.\\ 
(i) The {\rm ray-linearity zone } of $f$ is the set 
of $x\in \RR^n\setminus\{0\}$ such that $f|_{[0,x]}$ is linear.\\
(ii) {\rm $f$ is nonlinear at infinity } if for every ray $\RR^+x$ the one dimensional function $f(tx)$ defined on $t\in{\RR^+}$ has domain $\RR^+$ and is not between $h(t)=at$ and $h(t)-b$ for any $a\ge 0$ and $b>0$. 
\end{definition}

\begin{lem}\label{lem:linzone}
A function $f\in \Cvx_0(\RR^n)$ is nonlinear at infinity if any only if the linearity zone of $\Pf$ is empty. 
\end{lem}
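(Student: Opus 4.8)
The plan is to reduce the statement to a sharp criterion for when $\Pf$ is linear on a segment $[0,y]$ through the origin, read off from the explicit formula \eqref{GeomDualityDef}, and then translate that criterion into the asymptotic behavior of $f$ along rays. First I would dispose of trivial cases: if $f$ is bounded (equivalently $f\equiv 0$, as $f\in\Cvx_0(\RR^n)$) then $\Pf=1^c_{\{0\}}$ has empty ray-linearity zone, as required; and if $\dom f\neq\RR^n$, then $f$ leaves its domain along the ray through any $x_0\notin\dom f$ (so $f$ is not nonlinear at infinity), while by \eqref{eq:domdual} $\{\Pf=0\}=(\overline{\dom f})^{\circ}\neq\{0\}$, so $\Pf$ is linear (identically $0$) on $[0,y]$ for $y\in\{\Pf=0\}\setminus\{0\}$, i.e.\ the ray-linearity zone is nonempty. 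So assume $f\not\equiv 0$ (hence $f$ is unbounded) and $\dom f=\RR^n$.

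The heart of the argument is the criterion: \emph{for $y$ with $0<\Pf(y)=c<\infty$, $\Pf$ is linear on $[0,y]$ if and only if the supremum in \eqref{GeomDualityDef} is attained only in the limit, i.e.\ there exist $x_k$ with $f(x_k)\to\infty$ and $\tfrac{\langle x_k,y\rangle-1}{f(x_k)}\to c$.} This rests on the identity $\tfrac{t\langle x,y\rangle-1}{f(x)}=t\,\tfrac{\langle x,y\rangle-1}{f(x)}-\tfrac{1-t}{f(x)}$: convexity with $\Pf(0)=0$ gives $\Pf(ty)\le tc$ for $t\in[0,1]$, and since the discrepancy $\tfrac{1-t}{f(x)}$ tends to $0$ precisely along sequences with $f(x)\to\infty$, the value $tc$ is attained if and only if a maximizing sequence escapes to infinity; evaluating at $t=\tfrac12$ shows this is necessary, while a single escaping sequence serves all $t\in(0,1)$, giving sufficiency. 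Hence the ray-linearity zone of $\Pf$ is nonempty iff such a $y$ exists.

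Next I would build the bridge to ``nonlinear at infinity''. From an escaping sequence $x_k$ one has $\langle x_k,y\rangle\to\infty$, so $|x_k|\to\infty$; pass to a subsequence with $x_k/|x_k|\to u$, whence $\tfrac{f(x_k)}{|x_k|}\to a:=\tfrac{\langle u,y\rangle}{c}\ge 0$. The affine minorant $\ell(x):=\tfrac{\langle x,y\rangle-1}{c}\le f$, forced by $\Pf(y)=c$, gives $f(su)\ge as-\tfrac1c$ on $\RR^+u$; and for fixed $s$, writing $su_k=\tfrac{s}{|x_k|}x_k+(1-\tfrac{s}{|x_k|})\cdot 0$ and using convexity together with lower semicontinuity gives $f(su)\le\lim_k\tfrac{s}{|x_k|}f(x_k)=as$. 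So $as-\tfrac1c\le f(su)\le as$ for all $s\ge 0$, which when $\langle u,y\rangle>0$ is exactly the failure of nonlinearity at infinity along $\RR^+u$ (slope $a>0$), and when $\langle u,y\rangle=0$ forces $f\equiv 0$ on $\RR^+u$. For the converse, if $f$ is trapped between $h(s)=as$ and $h(s)-b$ along a ray $\RR^+x_0$ with $a>0$, then $as-f(sx_0)$ is concave, nonnegative and bounded, hence increases to some $e_\infty$, so $f(sx_0)\ge as-e_\infty$ with equality in the limit; Hahn--Banach extends the resulting supporting hyperplane of $\epi f$ to an affine minorant $\langle\lambda_0,\cdot\rangle-\mu_0\le f$ with $\langle\lambda_0,x_0\rangle=a$, and then $y:=\lambda_0/\mu_0$ satisfies $\Pf(y)=1/\mu_0$ with the supremum attained along $x_k=kx_0$, so $\Pf$ is linear on $[0,y]$ by the criterion.

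The step I expect to be the main obstacle is the estimate $f(su)\le as$ and the handling of the degenerate direction $\langle u,y\rangle=0$ (equivalently, of rays contained in $\{f=0\}$), together with fixing the precise reading of ``between $h(t)=at$ and $h(t)-b$'' — in particular that the slope should be taken positive — so that the trivial functions $f\equiv 0$ and $1^c_{\{0\}}$ stay consistent. As an alternative, more geometric route one may invoke Lemma \ref{lem:JisF}: since $\epi(\Jf)=F(\epi f)$ with $F(x,t)=(x/t,1/t)$ fractional-linear, a ray of $\partial(\epi f)$ running to infinity inside a supporting hyperplane — i.e.\ ``$f$ asymptotically affine along a ray'' — corresponds to a boundary segment of $\epi(\Jf)$ issuing from $\RR^n\times\{0\}$, and via $\Pf=\L\Jf$ and the identity $\{\Jf=0\}=\partial(\Pf)(0)$ this is precisely a segment on which $\Pf$ is linear through the origin.
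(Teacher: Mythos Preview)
Your route is genuinely different from the paper's. The paper argues purely order-theoretically: it sandwiches $g=\Pf$ between a half-linear function $l(x)=\langle x,u\rangle_+$ (with $u\in\partial g(w/2)$, so automatically $\langle w,u\rangle=g(w)>0$) and $\sup(l,1^c_{[0,w]})$, and then applies the order-reversing involution $\P$ to obtain a sandwich on $f=g^\circ$ along the ray $\RR^+u$. You instead work analytically with the supremum formula, prove the nice criterion ``$\Pf$ is linear on $[0,y]$ iff some maximizing sequence escapes to infinity,'' and then try to read off the ray for $f$ by compactness of the sphere. Your approach is more concrete and yields the explicit sandwich $as-\tfrac1c\le f(su)\le as$; the paper's approach is softer but requires computing polars of the simple building blocks.

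The obstacle you flag --- the degenerate direction $\langle u,y\rangle=0$ --- is real and is precisely where the two approaches diverge. Your extraction $u=\lim x_k/|x_k|$ can genuinely land on $\langle u,y\rangle=0$: for instance with $f(x_1,x_2)=\max(0,|x_2|-1)$ and $y=(0,1)$ one has $\Pf(ty)=t$ on $[0,1]$, and the escaping sequence $x_k=(k^2,k)$ gives $u=(1,0)$, $a=0$, yielding only $f\equiv 0$ on $\RR^+u$, which under the reading $a>0$ is not yet a failure of ``nonlinear at infinity.'' The paper sidesteps this entirely because its direction is the \emph{subgradient} $u\in\partial\Pf(y/2)$, which by construction satisfies $\langle y,u\rangle=\Pf(y)>0$; dualizing $\Pf\ge\langle\cdot,u\rangle_+$ gives directly $f(tu)\le t$, i.e.\ a positive-slope upper bound on the correct ray. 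If you want to keep your analytic approach, the clean fix is to replace the sphere-compactness step by normalizing with $\langle x_k,y\rangle$ \emph{after} first passing to the subgradient ray --- or simply import the paper's choice of $u$ at that one point. Your Hahn--Banach step in the converse (extending the one-dimensional asymptotic support line to an affine minorant of $f$ on $\RR^n$) is fine via separation of $\epi f$ from the line $\{(sx_0,\,as-e_\infty):s\in\RR\}$; just note the boundary case $e_\infty=0$ (so $f$ is exactly linear on $\RR^+x_0$) separately, where one takes $\lambda\in\partial f(x_0)$ and checks $\Pf(t\lambda)=t$ directly. Your observation that the definition should be read with $a>0$ is also correct, as the example $f(x_1,x_2)=x_2^2$ shows.
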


\begin{proof}
Put $g:=\Pf$. A function $g$ in $\Cvx_0(\RR^n)$ is linear on a segment $[0,w]$ if and only if it is above some linear function $l(x) = \iprod{x}{u}_+$ ($u$ not orthogonal to $w$) and below the function which in $+\infty$ everywhere and equal to $\iprod{x}{u}$ on the segment $[0,w]$. This function may be written as $\sup (l, 1^c_{[0,w]})$. By taking polars of these conditions, we get that this happens if and only if $g^{\circ}$ is below $l^\circ (x)$ and above $\hat{\inf} (l^\circ, 1^c_{[0,w]^{\circ}})$. Note that $l$ is simply the norm associated with a certain body (a halfspace in fact) so that $l^\circ$ is simply the norm associated with the polar of this body, which is the segment $[0,u]$. This norm is infinity outside $\RR^+u$ and equals to $\iprod{x}{\frac{u}{|u|^2}}$ on this ray.   As for $\hat{\inf} (l^\circ, 1^c_{[0,w]^{\circ}})$, it is the convexified minimum of $l^\circ$ and of the indicator of a halfspace, so it is $0$ on the halfspace and linear outside, with slope the same as $l^\circ$ was. By Hahn Banach theorem, this is equivalent to the fact that $g^{\circ}(tw)$ restricted to $t\in \RR^+$ is below the linear function $h(t)= t\iprod{w}{\frac{u}{|u|^2}}$ and above $h(t)-b$ for some $b>0$.
\end{proof}

\section{Polar subdifferential map }\label{PolaritySubDiffSec}

A central feature of the Legendre transform is that it is related to
a gradient mapping. 
Namely, when $f\in C^1(\R^n)$ is strictly convex,
$\nabla f:\RR^n\ra(\RR^n)^\star\cong\RR^n$ is injective, and
$$
\Lf(y):=\sup_{x\in\RR^n}[\langle x,y\rangle - f(x)]
$$
can be computed explicitly from the function and its gradient map,
\begin{equation}
\label
{LegendreExactFormulaEq}
\Lf (y)=\langle (\nabla f)^{-1}(y),y\rangle - f (\nabla
f)^{-1}(y)).
\end{equation}
More generally, for any proper closed convex function $f$, one uses the subdifferential map $\del f (x) = \{ y:
f(z) \ge f(x) + \langle  z-x, y\rangle \,\,\, \forall z\},$ and the
inverse of the subdifferential map detects the points where the
supremum is attained, so that $\del f (x) = \{ y: \Lf (y) + f(x) =
\langle  x,y\rangle\}$.  Moreover,
$\del f=(\del \Lf)^{-1}$, i.e., $ y \in \del f(x) {\rm
~if~and~only~if~} x\in \del \Lf(y). $
The above facts motivate the following definition for the polar-subdifferetial map.

\begin{definition}
For a function $f:\R^n \to \R^+\cup \{\infty\}$ define the
polar subdifferential map $\del^\circ f$ at a point $x\in \dom (f)$ by
\begin{equation}\label{eq-psg}
\psg f(x):= \left\{ y\in \dom(f^{\circ}): \Pf(y)f(x) = \langle x,y\rangle-1
\right\}.
\end{equation}
We say that $y\in\psg f(x)$ is a polar subgradient of $f$ at $x$.
The {\em domain} of $\psg f$, $\dom (\psg f)$, is defined as the set of $x$ with $\psg f (x) \neq \emptyset$.
\end{definition}
Note that $\psg f (x)$ is a  convex set.  Indeed, $\Pf ((1-\lambda)y_1+\lambda y_2)\le (1-\lambda)\Pf (y_1)+\lambda \Pf(y_2)$ hence if $y_1,y_2\in \psg f (x)$ then $\Pf((1-\lambda)y_1+\lambda y_2)f(x) \le \langle x,(1-\lambda)y_1+\lambda y_2\rangle-1$, and as the opposite inequality is always true by definition of $\Pf$, we have equality on $[y_1, y_2]$. Also note that $\psg f (x)$ is closed relatively to $\dom (\Pf)$. 

\begin{rem}\label{rem-zerosets}
{\rm One could roughly restate the definition above in words as follows: ``$y$ is a polar sub-gradient of $f$ at $x$ if the supremum in the definition of $\Pf(y)$ is attained at $x$.''  The case for which this second definition does differ from the one above is when $f(x) = 0$. Let us shortly discuss this case:
First note that $\psg f (0) = \emptyset$. Consider some $x\neq 0$ with $f(x)=0$. 
In such a case $ \psg f(x)= \left\{ y\in \dom(f^{\circ}):
\langle x,y\rangle=1 \right\}$. If the function $f\in Cvx_0(\RR^n)$ has zero set
$K$, that is, $f|_{K} = 0$  and  $f|_{\R^n\setminus K} \neq 0$, then
$\overline{\dom(\Pf)} = K^{\circ}$ so that the above definition
becomes, for $x\in K$,
\[
\psg f(x):= \left\{ y\in K^{\circ} :  \langle x,y\rangle=1,
\Pf(y)\neq \infty \right\}.
\] For $x$ in the relative interior of $K$ this is again an empty set, and
for $x$ on the boundary of $K$ the polar subdifferential is the set of supporting
functionals to $K$ at $x$ (which are in the boundary of $K^\circ$) and which belong to $\dom(\Pf)$. 
For example we may consider a function $\Pf$ with $\dom(f) = \int(K^{\circ})$, so that $f$ itself is $0$ on $K$ and for all $x$ in the boundary of $K$, the polar-subdifferential is empty.} 
\end{rem}

Note that if $f\in\Cvx_0(\R^n)$ then by \eqref{DoubleDualEq},
$(\del^\circ f)^{-1}=\psg\Pf$, i.e., $x\in \psg\Pf(y)$ if and only if
$y\in \psg f(x)$. This means that one may write 
\[ \psg \Pf (y) = \{ x\in \dom (f) : y\in \psg f (x)\}, \]
 that is, the polar subgradients of $\Pf$ at a point $y$ with $\Pf (y) \neq 0$ are precisely the points for which the supremum in the definition of $\Pf$ is attained. This allows us to examine many examples for which $\psg f$ is empty, for example when $f$ is a norm then clearly in the definition of $\Pf$ (which is the dual norm) the supremum is never attained.

Note that unlike the usual subdifferential,
$\psg f(x)$ can be empty even when $f$ is smooth and convex at $x$.
This and other properties of $\psg$ will follow from the following
basic lemmas. Below we say that 
``$g|_{[0,x+]}$ in not linear'' when there is no 
interval $[0,tx]$ with $t>1$ on which $g$ is linear.

\begin{lem}\label{lem:moment-map-gencase} Let $f\in \Cvx_0(\R^n)$. Then for each
$x\in \int(\dom(f))$ with $f(x)\not=0$,
\begin{eqnarray*}
 \psg f(x) &=&  \{ y: \Pf(y), \Lf(y/\Pf(y))
 \neq 0,+\infty, \Pf|_{[0,y+]}{\rm
 ~not~linear~and~}
  y/\Pf(y) \in \partial f (x) 
\} \\&=& \left\{z/\Lf(z)\,:\, z\in\del f(x), \Lf|_{[0,z+]} {\rm ~not~linear~and~} \Lf(z), \Pf (z/\Lf(z))\neq 0, +\infty 
\right\}.
\end{eqnarray*}
If, in addition, $f$ is not linear on $[0,x+]$ and $f(x), \Jf(x/f(x))\neq 0, +\infty$ we have that 
\begin{eqnarray*}
\psg f(x) &=& (\del\Pf)^{-1}(x/f(x)) := \{ y\,:\, x/f(x) \in
\partial \Pf (y) \}.
\end{eqnarray*}
\end{lem}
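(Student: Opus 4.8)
The plan is to reduce all three identities to the defining relation of $\psg$ and to the product identity of Lemma~\ref{lem:JDualEq}, the latter applied not only to $f$ but to all of $f$, $\Pf$, $\Lf$ and $\Jf$. Each of these lies in $\Cvx_0(\R^n)$ (for $\Lf$ because $f\ge0$ and $f(0)=0$), and one has $(\Pf)^{\circ\star}=\Lf$, $(\Lf)^{\circ\star}=\Pf$, $(\Pf)^\star=\Jf$, so Lemma~\ref{lem:JDualEq} is available for $\Pf$ and $\Lf$ as well. Two further facts, immediate from the definitions, serve as tools: $\Pf(w)\Lf(w/\Pf(w))\le1$ and $\Lf(w)\Pf(w/\Lf(w))\le1$ whenever the outer value is finite and positive (divide $\langle v,w\rangle\le1+\Pf(w)f(v)$, resp.\ $f(v)\ge\langle v,w\rangle-\Lf(w)$, through and take suprema over $v$). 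The computation underlying everything is: if $\Pf(y)=:c$ is finite and positive, then $y\in\psg f(x)$ is equivalent to $cf(x)=\langle x,y\rangle-1$, which holds iff the affine function $z\mapsto\langle z,y/c\rangle-1/c$ lies below $f$ and touches it at $x$, equivalently iff $y/c\in\del f(x)$ and $\Lf(y/c)=1/c$. Finally, for $x\in\int(\dom(f))$ with $f(x)\ne0$ every $y\in\psg f(x)$ automatically has $\Pf(y)\in(0,\infty)$: $\Pf(y)=+\infty$ is excluded by $y\in\dom(\Pf)$, and $\Pf(y)=0$ would force $\langle z,y\rangle\le1$ for all $z\in\dom(f)$, hence $\langle x,y\rangle<1$, contradicting $cf(x)=\langle x,y\rangle-1$ with $c=0$.

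For the first identity, inclusion ``$\subseteq$'': given $y\in\psg f(x)$, put $c=\Pf(y)\in(0,\infty)$; from $\langle z,y\rangle\le1+cf(z)$ for all $z$ with equality at $z=x$ one reads off $y/c\in\del f(x)$ and $\Lf(y/c)=1/c$, so already $\Pf(y),\Lf(y/\Pf(y))\ne0,\infty$ and $y/\Pf(y)\in\del f(x)$. If $\Pf$ were linear on some $[0,ry]$ with $r>1$ then (using $\Pf(0)=0$) $\Pf(ry)=rc$ and $ry/\Pf(ry)=y/c$, whence $\Pf(ry)\Lf(ry/\Pf(ry))=rc\cdot(1/c)=r>1$, contradicting $\Pf(w)\Lf(w/\Pf(w))\le1$ at $w=ry$; so $\Pf$ is not linear on $[0,y+]$. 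For ``$\supseteq$'', assume $y$ satisfies the four listed conditions; Lemma~\ref{lem:JDualEq} applies to $\Pf$ at $y$ and gives $\Pf(y)\Lf(y/\Pf(y))=1$, and combining this with $\Lf(y/\Pf(y))=\langle x,y/\Pf(y)\rangle-f(x)$ (valid since $y/\Pf(y)\in\del f(x)$) yields $\langle x,y\rangle=1+\Pf(y)f(x)$, i.e.\ $y\in\psg f(x)$.

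The second identity I would obtain from the first via the mutually inverse maps $y\mapsto z:=y/\Pf(y)$ and $z\mapsto z/\Lf(z)$: on $\psg f(x)$ one has $\Pf(y)\Lf(y/\Pf(y))=1$, so these are inverse there, and under this substitution the conditions on the two sides correspond by exactly the arguments of the previous paragraph with $\P$ and $\L$ interchanged (using $\Lf\in\Cvx_0(\R^n)$, $(\Lf)^{\circ\star}=\Pf$, and $\Lf(w)\Pf(w/\Lf(w))\le1$ to handle the non-linearity of $\Lf$ past $z$). For the last identity, the hypotheses $f(x)\ne0,\infty$ and ``$f$ not linear on $[0,x+]$'' let Lemma~\ref{lem:JDualEq} (its ``moreover'' clause) be applied to $f$ at $x$, giving $f(x)\Jf(x/f(x))=1$. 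Now $x/f(x)\in\del\Pf(y)$ means, by the standard Legendre characterization of the subdifferential together with $(\Pf)^\star=\Jf$, that $\Jf(x/f(x))+\Pf(y)=\langle x,y\rangle/f(x)$; multiplying by $f(x)$ and substituting $f(x)\Jf(x/f(x))=1$ turns this into $1+\Pf(y)f(x)=\langle x,y\rangle$, which, together with $\Pf(y)<\infty$, is precisely $y\in\psg f(x)$; both directions of the equivalence fall out of this chain.

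I expect the delicate point to be the treatment of the ``not linear on $[0,y+]$'' (and ``on $[0,z+]$'') conditions: this is exactly the hypothesis under which Lemma~\ref{lem:JDualEq} upgrades the easy inequality $\Pf(w)\Lf(w/\Pf(w))\le1$ to the equality $\Pf(y)\Lf(y/\Pf(y))=1$ needed for the reverse inclusions, while on $\psg f(x)$ the exact value $\Lf(y/\Pf(y))=1/\Pf(y)$ combined with that same inequality forces the non-linearity automatically. Everything else is bookkeeping with the definitions of $\psg$ and $\del$ and routine reuse of Lemma~\ref{lem:JDualEq} for $f$, $\Pf$, $\Lf$ and $\Jf$, once each of these has been checked to lie in $\Cvx_0(\R^n)$ with its $\P\L$-image identified.
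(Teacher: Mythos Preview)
Your proof is correct and follows the same overall strategy as the paper's: reduce the defining relation $\Pf(y)f(x)=\langle x,y\rangle-1$ to $y/\Pf(y)\in\partial f(x)$ together with $\Lf(y/\Pf(y))=1/\Pf(y)$, and supply the latter (for the reverse inclusion) via Lemma~\ref{lem:JDualEq} applied to $\Pf$. The only notable variation is in the last identity, where you argue directly via the Fenchel--Young equality $\Jf(x/f(x))+\Pf(y)=\langle x/f(x),y\rangle$ and substitute $f(x)\Jf(x/f(x))=1$, whereas the paper instead invokes the symmetry $y\in\psg f(x)\Leftrightarrow x\in\psg\Pf(y)$ and re-applies the first assertion to $\Pf$; both routes are equally short.
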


Note that the condition $x\in \int(\dom(f))$ is important, for instance check the example 
$f(x) = 
\hat{\inf} \left( 1^c_{[0,1]}, \max (x/2, 1^c_{[0,2]})\right)$ at $x = 2$. Here $\hat{f}$ denotes the closed convex envelope of $f$. In this example $\psg f (2) = [1/2, 1]$ whereas the expression in the right hand side of the first equation in the lemma gives $(1/2, 1]$.  

\begin{proof}
First, $y\in\psg f(x)$ satisfies $\Pf (y) \neq 0$ otherwise by Remark \ref{rem-zerosets}
$x$ would be at the boundary of the domain of $f$, contrary to the assumption.
Thus, in  equation \eqref{eq-psg} one may divide by $\Pf(y)$ so that the condition
 is equivalent to $\langle  \cdot ,
y/\Pf(y)\rangle-\frac1{\Pf(y)}$ defining a supporting hyperplane for
$f$ at $x$. In particular, $y/\Pf(y) \in \partial f(x)$. This in turn implies (by Legendre theory) that
$f(x) = \langle x,
y/\Pf(y)\rangle - {\Lf(\frac{y}{\Pf(y)})}$ and comparing this with the definition of $\psg f (x)$ we get that $\Lf(\frac{y}{\Pf(y)}) = \frac{1}{\Pf(y)}$ so that in particular $\Lf (\frac{y}{\Pf(y)})\neq 0,+\infty$.
Finally, were it the case that $\Pf$ was linear on some interval $[0,yt]$, for some $t>1$ then we'd have that
\[ \Pf(ty) = t\Pf(y) = \frac{\langle x,ty\rangle - t}{f(x)}<
\frac{\langle x,t y\rangle - 1}{f(x)}\le \Pf(ty), \] a contradiction.

On the other hand, suppose that  $\Pf(y),
\Lf(\frac{y}{\Pf(y)})
\neq 0, +\infty$, $\Pf|_{[0,y+]}$ not linear and
$y/\Pf(y) \in \del f(x)$.
Then $\langle \cdot , y/\Pf(y)\rangle - {\Lf(\frac{y}{\Pf(y)})}$ is
a supporting hyperplane for $f$ at $x$, and in particular $f(x) =
\langle x, y/\Pf(y)\rangle - {\Lf(\frac{y}{\Pf(y)})}$. 
By Lemma
\ref{lem:JDualEq}, applied to $\Pf$ (which can be applied since $\Pf$ is not linear on
$[0,y+]$)
we have that 
${\Lf(\frac{y}{\Pf(y)})} = \frac{1}{\Pf (y)}$ which  concludes
the proof of second inclusion and thus completes the first equality. 

For the second equality, note that by the above argument, letting $z
= y/\Pf(y)$ we have that $y = z/\Lf (z)$.  Thus, if $y\in \psg f(x)$
then it can be written as $z/\Lf (z)$ for some $z$ satisfying the
conditions. (Indeed, one only should notice that linearity of $\Pf$ on $[0,w]$ is equivalent to linearity of $\Lf= (\Pf)^{\star\circ}$ on $[0, w/\Pf(w)]$, by the properties of the transform $\P\L$ discussed in Section \ref{PolarityNonNegSec}). On the other hand, given $y= z/\Lf(z)$ in $dom(\Pf)$
such that $\Pf(y) \neq 0$ and $\Pf$ is not linear on $[0,y+]$, we
see again, that $y/\Pf(y) = z$, otherwise $\Pf(z/\Lf(z))\Lf(z) \neq
1$ which implies that $\Lf$ is linear on $[0,z+]$, a contradiction to
the assumptions.
This completes the proof of the first assertion.

For the second, simply use that $y\in
\psg f (x)$ if and only if $x\in \psg \Pf (y)$ together with the first
assertion.
\end{proof}

An easy consequence is the following:
\begin{cor}\label{SecondPgCor}
Let $f\in\Cvx_0(\RR^n)$  and $x\in \int(\dom(f))$ with $f(x)\neq 0$, and assume $f$ is not linear on $[0,x+]$. Then
\begin{equation}\label{PgCharEq}
\psg f(x)
=
\del\Jf(x/f(x)),\end{equation}
and
\begin{equation}
\psg \Jf(x/f(x))
=
\del f(x).
\end{equation}
\end{cor}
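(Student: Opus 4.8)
The plan is to derive Corollary~\ref{SecondPgCor} directly from Lemma~\ref{lem:moment-map-gencase} by showing that the side conditions appearing there become automatic once we invoke the composition facts for $\P\L$ from Section~\ref{PolarityNonNegSec}. First I would recall that for $f \in \Cvx_0(\R^n)$, since $f$ is not linear on $[0,x+]$ and $f(x) \neq 0, +\infty$, Lemma~\ref{lem:JDualEq} gives $f(x)\Jf(x/f(x)) = 1$, so in particular $\Jf(x/f(x)) \neq 0, +\infty$. This is precisely the extra hypothesis needed to apply the last displayed identity of Lemma~\ref{lem:moment-map-gencase}, which yields $\psg f(x) = (\del\Pf)^{-1}(x/f(x))$.

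The second step is to identify $(\del\Pf)^{-1}(x/f(x))$ with $\del\Jf(x/f(x))$. Here I would use the classical Legendre relation $(\del g)^{-1} = \del \L g$ valid for any proper closed convex function $g$; applied to $g = \Pf \in \Cvx_0(\R^n)$ this gives $(\del\Pf)^{-1} = \del(\L\Pf) = \del(\Jf)$, since $\Jf = f^{\circ\star} = (\Pf)^\star$ and $\L$ and $\P$ commute so $\Jf = \L\Pf$. Combining the two steps gives the first identity $\psg f(x) = \del\Jf(x/f(x))$.

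For the second identity, the natural route is to apply the first identity with $f$ replaced by a function whose relevant data is $\Jf$. Since $\P\L$ is an order-preserving involution on $\Cvx_0(\R^n)$, we have $(\Jf)^{\circ\star} = \Jf^{\circ\star\circ\star}$... more cleanly: $\Jf \in \Cvx_0(\R^n)$ and $(\L\P)(\Jf) = f$, equivalently $(\Jf)^{\circ\star} = f$. The point $y_0 := x/f(x)$ satisfies $y_0 \in \dom(\Jf)$, $\Jf(y_0) = 1/f(x) \neq 0, +\infty$, and by Lemma~\ref{lem:moment-map-gencase} (the linearity equivalences discussed in its proof) $\Jf$ is not linear on $[0, y_0+]$ because $f$ is not linear on $[0,x+]$. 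Applying the already-proved first identity to $\Jf$ at the point $y_0$ gives $\psg \Jf(y_0) = \del((\Jf)^{\circ\star})(y_0/\Jf(y_0)) = \del f(y_0/\Jf(y_0))$. Finally $y_0/\Jf(y_0) = (x/f(x))/(1/f(x)) = x$, so $\psg\Jf(x/f(x)) = \del f(x)$, as claimed.

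I expect the main obstacle to be bookkeeping the side conditions rather than any deep argument: one must check carefully that the non-linearity hypothesis on $f$ at $[0,x+]$ transfers correctly to $\Jf$ at $[0, (x/f(x))+]$ under the fractional-linear map $F$ of Lemma~\ref{lem:JisF}, and that none of the values $\Pf(y)$, $\Lf(y/\Pf(y))$, $\Jf$ degenerate to $0$ or $+\infty$ along the way; the interior assumption $x \in \int(\dom f)$ is what rules out the boundary pathology flagged in Remark~\ref{rem-zerosets} and in the counterexample after Lemma~\ref{lem:moment-map-gencase}. Once these are in place the corollary is essentially a restatement of Lemma~\ref{lem:moment-map-gencase} combined with the identity $(\del g)^{-1} = \del\L g$.
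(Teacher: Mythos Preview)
Your proposal is correct and follows essentially the same route as the paper: invoke the last clause of Lemma~\ref{lem:moment-map-gencase} to get $\psg f(x)=(\del\Pf)^{-1}(x/f(x))$, convert this via the Legendre identity $(\del g)^{-1}=\del\L g$ into $\del\Jf(x/f(x))$, and then deduce the second equation by applying the first to $\Jf$ at $x/f(x)$, using the linearity transfer under $F$ and the relation $(x/f(x))/\Jf(x/f(x))=x$ from Lemma~\ref{lem:JDualEq}. Your explicit verification that $\Jf(x/f(x))\neq 0,+\infty$ via Lemma~\ref{lem:JDualEq} is a nice touch that the paper leaves implicit.
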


\begin{proof}
By Lemma
\ref{lem:moment-map-gencase}, for $f$ satisfying  these conditions,
 $y\in\psg f(x)$ is equivalent to
$x/f(x)\in\del\Pf(y)$, which in turn, by Legendre theory, is
equivalent to $y\in\del\Jf(x/f(x))$. The second equation follows after noticing that $f$ is linear on $[0,w]$ 
if and only if $\Jf$ is linear on $[0,w/f(w)]$, and then applying the first equality to $\Jf$ at $x/f(x)$, noticing that 
under non-linearity of $f$ on $[0,x+]$ we have $(x/f(x))/\Jf(x/f(x))=x$ 
by \eqref{JDualEq}.
\end{proof}

Differentiability and polar differentiability are related by the next lemma.

\begin{lem}\label{SecondPgLemma}
If $f\in\Cvx_0(\RR^n)$ is differentiable and nonzero at $x\in \int(\dom(f))$
then
either $\psg f(x)\!=\!\emptyset$
or $\psg f(x)\!=\!\{y\}$ where
\begin{equation}\label{eq:FpolarRepresenation}
y=\frac {\grad f (x)}{\Lf(\grad f (x))},
\qquad \h{and } \qquad
\fpolar(y)=\frac1{\Lf(\nabla f(x))}=\frac
1{\langle x,\nabla f(x)\rangle-f(x)}.
\end{equation}
Conversely,
if $f\in\Cvx_0(\RR^n)$, $x \in \int(\dom f))$ with $f(x)\neq0$, satisfy $\psg f(x)=\{y\}$   
then $\del f(x)=\{y/\Pf(y)\}$. In particular, $f$ is differentiable at $x$,
and $y=\nabla f(x)/\Lf(\nabla f(x))$.
\end{lem}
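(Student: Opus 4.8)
The plan is to deduce both directions of Lemma \ref{SecondPgLemma} from the already-established Lemma \ref{lem:moment-map-gencase} (and its corollary), using differentiability to collapse the various subdifferentials to singletons. For the forward direction, assume $f$ is differentiable and nonzero at $x\in\int(\dom f)$, and suppose $\psg f(x)\neq\emptyset$, so there is some $y\in\psg f(x)$. By the first equality in Lemma \ref{lem:moment-map-gencase}, $y/\Pf(y)\in\del f(x)$; since $f$ is differentiable at $x$, $\del f(x)=\{\nabla f(x)\}$, so $y/\Pf(y)=\nabla f(x)$, i.e. $y=\Pf(y)\,\nabla f(x)$ is a positive multiple of $\nabla f(x)$. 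The same Lemma tells us $\Lf(y/\Pf(y))=1/\Pf(y)$, i.e. $\Lf(\nabla f(x))=1/\Pf(y)$, so $\Pf(y)=1/\Lf(\nabla f(x))$ and hence $y=\nabla f(x)/\Lf(\nabla f(x))$ is uniquely determined; this already gives $\psg f(x)=\{y\}$ (it cannot have more than one element) and the first displayed formula. For the value $\fpolar(y)$, we use $\Pf(y)=1/\Lf(\nabla f(x))$ together with the Legendre identity $\Lf(\nabla f(x))=\langle x,\nabla f(x)\rangle-f(x)$, valid since $\nabla f(x)\in\del f(x)$, which is exactly the second displayed formula.

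For the converse, assume $\psg f(x)=\{y\}$ with $f(x)\neq0$ and $x\in\int(\dom f)$. The goal is to show $\del f(x)=\{y/\Pf(y)\}$, whence differentiability of $f$ at $x$ follows (a convex function with a singleton subdifferential at an interior point is differentiable there) and then $y=\nabla f(x)/\Lf(\nabla f(x))$ follows from the forward direction already proved. To get $\del f(x)=\{y/\Pf(y)\}$, note first that, as in the proof of Lemma \ref{lem:moment-map-gencase}, $\Pf(y)\neq0$ (else $x$ would be a boundary point of $\dom f$), so $y/\Pf(y)$ is well-defined, and the argument there shows $\langle\cdot,y/\Pf(y)\rangle-1/\Pf(y)$ is a supporting hyperplane of $f$ at $x$, i.e. $y/\Pf(y)\in\del f(x)$. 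The content is the reverse inclusion: that $\del f(x)$ contains nothing else. Suppose $z\in\del f(x)$; I want to produce a polar subgradient of $f$ at $x$ associated to $z$ and conclude by uniqueness that it must coincide with $y$, forcing $z=y/\Pf(y)$.

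The natural candidate is $w:=z/\Lf(z)$, which by the second equality in Lemma \ref{lem:moment-map-gencase} lies in $\psg f(x)$ provided the side conditions hold: $\Lf(z),\Pf(z/\Lf(z))\neq0,+\infty$, and $\Lf|_{[0,z+]}$ is not linear. The main obstacle is precisely the bookkeeping of these nondegeneracy conditions: I need to rule out the degenerate cases $\Lf(z)\in\{0,+\infty\}$, $\Pf(w)\in\{0,+\infty\}$, and $\Lf$ linear on $[0,z+]$, using only the hypothesis $\psg f(x)=\{y\}\neq\emptyset$ and $x\in\int(\dom f)$, $f(x)\neq0$. Here $\psg f(x)\neq\emptyset$ already forces, via the first description in Lemma \ref{lem:moment-map-gencase} applied to the known element $y$, that $f$ is not linear on $[0,x+]$ (else the contradiction in that proof, $\Pf(ty)<\Pf(ty)$, recurs) and that the relevant transforms are finite and nonzero along the ray through $y$; and $z\in\del f(x)$ with $f$ not linear near $x$ should pin down $\Lf(z)=\langle x,z\rangle-f(x)\in(0,+\infty)$ since $f(x)\neq0$ and $f$ not linear on $[0,x+]$. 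Once the side conditions are secured for $w=z/\Lf(z)$, we get $w\in\psg f(x)=\{y\}$, so $w=y$; then applying the relation $y/\Pf(y)=z$ obtained in the proof of Lemma \ref{lem:moment-map-gencase} (the identity $y=z/\Lf(z)\Leftrightarrow z=y/\Pf(y)$ under these conditions) yields $z=y/\Pf(y)$, proving $\del f(x)=\{y/\Pf(y)\}$. The remaining conclusions then follow immediately from the forward direction.
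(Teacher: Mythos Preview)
Your forward direction is essentially the paper's argument: Lemma \ref{lem:moment-map-gencase} plus $\del f(x)=\{\nabla f(x)\}$ immediately collapses $\psg f(x)$ to at most one point and identifies it.

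The converse, however, has a real gap at exactly the place you flag as ``bookkeeping.'' You want to show that for an arbitrary $z\in\del f(x)$ the side conditions of Lemma \ref{lem:moment-map-gencase} hold, so that $w=z/\Lf(z)\in\psg f(x)$. Your justification that $\Lf(z)=\langle x,z\rangle-f(x)\in(0,\infty)$ because ``$f$ is not linear on $[0,x+]$'' is not sufficient: $\Lf(z)=0$ is equivalent to $f$ being linear on $[0,x]$, not on $[0,x+]$, and the two are genuinely different (e.g.\ $f(t)=t$ on $[0,1]$, $f(t)=t^2$ on $[1,\infty)$, at $x=1$ the subgradient $z=1$ has $\Lf(1)=0$). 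You also give no argument for why $\Lf$ is not linear on $[0,z+]$ or why $\Pf(z/\Lf(z))\notin\{0,+\infty\}$; these do not follow from the nondegeneracy of the known element $y$ alone.

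The paper does not try to verify the side conditions for an arbitrary $z\in\del f(x)$. Instead it uses convexity of $\del f(x)$ to take a hypothetical second element $z'$ \emph{close to} the known good $z=y/\Pf(y)$, and argues by continuity that the side conditions (which hold at $z$) persist at $z'$, producing a second polar subgradient and a contradiction. A separate degenerate case---when no nearby $z'$ escapes linearity of $\Lf$ on $[0,z'+]$---is then shown to force $\del f(x)=[0,z]$, hence $0\in\del f(x)$ and $f(x)=0$, contradicting the hypothesis. This approximation-plus-case-split is the missing ingredient in your proposal.
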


\begin{proof}
The first part follows directly from Lemma \ref{lem:moment-map-gencase}. Indeed, 
from differentiability there exists only one $y\in \partial f (x)$ so that the set 
$\psg f (x)$ can include at most one element, and in case it includes an element, this element must be $y/\Lf (y)$. 
If indeed $\psg f(x)$  includes one element, $y$, then, still from 
Lemma \ref{lem:moment-map-gencase}, $y/\Pf(y) = \nabla f (x)$ and
$\Pf$ is not linear on $[0,y+]$ so that $\Pf(y) = 1/\Lf(y/\Pf(y))= 1/\Lf(\nabla f(x))$. This completes the proof of the first part.

Suppose now that $\psg f(x)=\{y\}$ for some $x\in \int(\dom(f))$ with $f(x)\neq 0$.
By Lemma \ref{lem:moment-map-gencase} we have that $y \in \dom(\Pf)$,  $\Pf (y) \neq 0$, 
$\Lf (\frac{y}{\Pf(y)})\neq 0$ and $\Pf|_{[0,y+]}$ is not linear. Letting $z:= y/\Pf (y) $ we have that  $z\in \partial f (x)$. 
By  \eqref{JDualEq} (and the remark following it) we thus have that $\Lf$  is not linear on $[0,z+]$ and $\Pf(y) = \frac{1}{\Lf(z)}$. In particular, $y = \frac{z}{\Lf(z)}$.

Assume that there was another element $z'\in \partial f (x)$. 
From convexity of the set $\partial f (x)$ we can clearly assume that $z'$ is 
as close as we wish to $z$. 

We claim that for $z'$ close enough to $z$ we have that $\Lf$ is not linear on $[0,z'+]$. 
If indeed this is true, we can make sure by continuity of $\Lf$ on its domain that $y = z/\Lf(z)$ and $y':= z'/\Lf(z')$ are close and thus also $\Pf (y') \neq 0$, and by Lemma \ref{lem:JDualEq} also $\Pf (y') \neq +\infty$ so that $y'\in \psg f(x)$. We thus must have that $y' = y$ but this means that $\Lf$ is linear on $[0,z]$ and $z' = tz$ for some $t<1$ which is not the case we are considering. 

The remaining case is that we 
cannot find $z'\in \partial f (x)$ close to $z$ such that $\Lf$ is not linear on $[0,z'+]$. 
This means that $\Lf$ {\em is} linear on $[0,z]$ and the only other $z'\in \partial f (x)$ are $z' = tz$ for $t<1$. Since $z'\in \partial f(x)$ if and only if $x\in \partial \Pf(z)$ this linearity implies that $\partial f(x) = [0,z]$. This already implies that $f(x) = 0$ 
(since we have for any $0\le t\le 1$ in particular that  $f(0) \ge f(x) - \iprod{tz}{x}$) which contradicts the assumption on $x$. 
\end{proof}

In the case where $\psg f $ is a single point $\{y\}$ we say that $f$ is 
{\it polar differentiable at $x$} and denote the polar gradient by
$$\pg f(x)=y.$$

Some further consequences of Lemmas \ref{lem:moment-map-gencase} and
\ref{SecondPgLemma} are the following. First, as already remarked in Remark \ref{rem-zerosets}
above, $\psg f(x)=\emptyset$ if $x\in\h{int}\,f^{-1}(0)$. Second, if
$f$ is differentiable at $x\neq 0$ in the boundary of $f^{-1}(0)$ (and hence $\nabla
f(x)=0$) we also have $\psg f(x)=\emptyset$. Indeed, by the same
remark, were the equation in the definition to hold, we would need
$y$ to belong to the boundary of $(f^{-1}(0))^{\circ}$, and $\iprod{x}{y} = 1$. However, for
such point we have that $\Pf (y) = +\infty$ since by definition
\[ \Pf (y) \ge  \frac{\langle x(1+\eps), y\rangle -
1}{f(x(1+\eps))} =   \frac{\eps{\langle x , y\rangle}}{o(\eps)} \to
+\infty.  \]

Finally, there is a close connection between smoothness of $f$ and
differentiability of $\Pf$, similar  to the one holding for
Legendre transform.

Our main concern in sections below will be that if a function is  
 both strongly convex and twice continuously 
 differentiable, then so is $\Pf$. 
Most of this claim is proved in Section \ref{PolaritySecondOrderSec} where we 
derive a precise formula for the Hessian of $\Pf$. We shall need, however, a simpler claim regarding differentiability. To this end, we introduce the following class of functions:

\begin{definition}
Denote by ${\cal S}_1(\R^n)$ the class of $f\in \Cvx_0(\R^n)$ which
attain only finite values, are continuously differentiable on
$\R^n \setminus \{0\}$ and are strictly convex (that is, their graph does not contain any segment). 
\end{definition}
Note that these functions vanish only at the origin. 

\begin{prop}\label{prop:Pfisgood}
Assume $f\in {\cal S}_1(\RR^n)$.
Then \\
1. $\dom(\Pf)=\RR^n$, $\Pf$ vanishes only at the origin.\\
2. $f$ is polar differentiable  at any point $x\neq 0$.\\
3. $\Pf$ is  differentiable at any point $y\neq 0$ such that $\Pf_{[0,y]}$ is not linear. \\
4. $\Pf$ is strictly convex at any point $y$ such that $\Pf_{[0,y]}$ is not linear.  
\end{prop}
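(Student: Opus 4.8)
The plan is to deduce each of the four items from the subdifferential theory already developed, principally Lemma \ref{lem:moment-map-gencase}, Lemma \ref{SecondPgLemma}, and the geometric picture from Lemma \ref{lem:JisF}. The key observation is that $f\in{\cal S}_1(\R^n)$ is finite-valued, $C^1$ away from the origin, and strictly convex, so it has a well-defined gradient at every $x\ne 0$, and since $f(0)=0$ with $f\ge 0$ and strict convexity, $f$ vanishes only at the origin; in particular $f^{-1}(0)=\{0\}$, so $\overline{\dom(\Pf)}=\{0\}^\circ=\R^n$, which gives the domain claim in item 1. For the vanishing claim in item 1, apply \eqref{eq:domdual}: $\{y:\Pf(y)=0\}^\circ=\overline{\dom(f)}=\R^n$, hence $\{y:\Pf(y)=0\}=\{0\}$.

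\textbf{Items 2 and 3.} For item 2, fix $x\ne 0$. Since $f$ is finite-valued and $C^1$, $x\in\int(\dom(f))=\R^n$ and $\del f(x)=\{\nabla f(x)\}$; since $f$ is strictly convex and $f(0)=0<f(x)$, we have $\nabla f(x)\ne 0$ (otherwise $x$ would be a minimum) and $f$ is not linear on any segment, in particular not on $[0,x+]$. Then $\Lf(\nabla f(x))=\langle x,\nabla f(x)\rangle-f(x)$; I must check this is neither $0$ nor $+\infty$. It is finite because $f$ is finite everywhere with at-most-linear-like growth controlled by $\nabla f(x)$ being a genuine subgradient — more carefully, $\Lf(\nabla f(x))=\langle x,\nabla f(x)\rangle-f(x)$ is automatically a finite real number. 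It is nonzero because $f(x)>0=f(0)$ forces $\langle x,\nabla f(x)\rangle-f(x)=\Lf(\nabla f(x))\ge \Lf(0)$... more directly, $\langle x,\nabla f(x)\rangle - f(x)\ge \langle 0,\nabla f(x)\rangle - f(0)=0$ with equality only if $0\in\del\Lf(\nabla f(x))$, i.e. only if $\nabla f(x)$ minimizes $\Lf$, i.e. $\nabla f(x)\in\del f(0)$, i.e. $f(z)\ge f(0)+\langle z,\nabla f(x)\rangle$ for all $z$; combined with $\nabla f(x)\in\del f(x)$ and strict convexity this is impossible unless... this is the point requiring the most care, but it reduces to: $\Lf(\nabla f(x))=0$ would force $\del\Lf$ at $\nabla f(x)$ to contain both $x$ and $0$, contradicting that $\del f(0)\cap\del f(x)=\emptyset$ for strictly convex $f$ with $x\ne 0$ (since $y\in\del f(0)\cap\del f(x)$ gives $f$ linear on $[0,x]$). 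Hence by Lemma \ref{SecondPgLemma} (first direction) $\psg f(x)$ is a single point, so $f$ is polar differentiable at $x$, proving item 2. For item 3: fix $y\ne 0$ with $\Pf|_{[0,y]}$ not linear. By item 1, $\Pf(y)\ne 0$; I claim $\Pf(y)\ne+\infty$ since $\dom(\Pf)=\R^n$. Now I want to produce $x$ with $y\in\psg f(x)$, equivalently $x\in\psg\Pf(y)$; by the reformulation after \eqref{eq-psg}, $\psg\Pf(y)$ is the set of $x$ attaining the supremum in the definition of $\Pf(y)$, i.e. $\sup_{x\ne 0}\frac{\langle x,y\rangle-1}{f(x)}$. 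Strict convexity plus superlinear-enough behavior of $f$ (from strict convexity and finiteness, along each ray $f(tx)/t\to\infty$ — this needs the ``nonlinear'' hypothesis to rule out linear rays, which strict convexity supplies) ensures this supremum is attained at some $x_0\ne 0$. Then $y\in\psg f(x_0)=\{\pg f(x_0)\}$ by item 2, and applying Lemma \ref{SecondPgLemma} (converse direction) to $\Pf$ at $y$: since $\psg\Pf(y)=\{x_0\}$ is a single point, $\Pf$ is differentiable at $y$ with $\nabla\Pf(y)=x_0/f(x_0)$.

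\textbf{Item 4.} For strict convexity of $\Pf$ at a point $y$ with $\Pf|_{[0,y]}$ not linear, I would use the fractional-linear description of Lemma \ref{lem:JisF} together with the composition $\P = \L\circ(\L\P)^{-1}$... more simply: by the Remark after Lemma \ref{lem:JisF}, strict convexity of $f$ outside its zero set implies strict convexity of $\Jf=f^{\circ\star}$ outside its zero set, hence $\Jf=\L\Pf$ is strictly convex on $\dom(\Jf)$; since the Legendre transform converts strict convexity of $\L g$ into differentiability of $g$ and differentiability of $\L g$ into strict convexity of $g$, strict convexity of $\Pf$ at $y$ corresponds to differentiability of $\Jf$ at the corresponding point $\nabla\Pf(y)$ — but actually the cleanest route is: $\Pf$ is strictly convex at $y$ iff $\del\Pf(y)$ is not forced to coincide with neighboring subdifferentials, and via $(x/f(x))/\Jf(x/f(x)) = x$ and $\del\Jf(x/f(x))=\psg f(x)$ from Corollary \ref{SecondPgCor}, plus item 2 giving $\psg f(x)$ a single point for every $x\ne 0$, one gets $\del\Jf$ single-valued everywhere on the interior of its domain, i.e. $\Jf$ is $C^1$ there, which by Legendre duality means $\L\Jf=\Pf$ is strictly convex — restricting the strictness to points where $\Pf$ is not linear on $[0,y]$, exactly as phrased. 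I would organize this by transporting the strict-convexity statement for $f$ through $F$ (Lemma \ref{lem:JisF}) to $\Jf$, then through $\L$ to $\Pf$, tracking that the ``linear on $[0,y]$'' exceptional set is precisely the image under $F^{-1}$ of the place where strictness can fail.

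\textbf{Main obstacle.} The hard part is the bookkeeping around the degenerate values $0$ and $+\infty$ of $\Lf$, $\Pf$, $\Jf$: verifying at each invocation of Lemmas \ref{lem:moment-map-gencase}, \ref{SecondPgLemma} and Corollary \ref{SecondPgCor} that the required genericity conditions (nonzero, non-infinite, non-linear on the relevant segment) actually hold under the ${\cal S}_1$ hypotheses. The single most delicate point is showing $\Lf(\nabla f(x))\ne 0$ for $x\ne 0$, i.e. that the Legendre value does not degenerate — this is where strict convexity of $f$ together with $f(0)=0$ must be used to rule out $0\in\del\Lf(\nabla f(x))$. Everything else is a fairly mechanical translation through the duality dictionary already assembled in Sections \ref{PolarityNonNegSec}--\ref{PolaritySubDiffSec}.
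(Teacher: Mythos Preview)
Your arguments for items 1, 2, and 4 are essentially the paper's, with one small lacuna in item 2: Lemma \ref{SecondPgLemma} alone gives only that $\psg f(x)$ is empty or a singleton; nonemptyness requires Lemma \ref{lem:moment-map-gencase}, which the paper invokes explicitly (and whose side-conditions follow from what you already verified together with item 1).

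The genuine gap is in item 3. Your attainment argument rests on the claim that strict convexity plus finiteness forces $f(tx)/t\to\infty$ along every ray, but this is false: $f(x)=\sqrt{1+x^2}-1$ lies in ${\cal S}_1(\R)$ yet $f(t)/t\to 1$. For this $f$ and $y=1$ the supremum defining $\Pf(1)$ is \emph{not} attained at any finite point; what rescues the proposition is that $\Pf$ turns out to be linear on $[0,1]$ here, so $y=1$ is excluded by hypothesis. Proving that attainment holds precisely when $\Pf|_{[0,y]}$ is not linear is the real content, and a bare compactness argument will not deliver it. The paper sidesteps this by working with $\Jf$: Lemma \ref{lem:JisF} says the fractional-linear map $F$ carries $\int(\epi f)$ onto $\int(\epi\Jf)$, and since $F$ sends segments to segments, strict convexity of $f$ transports to strict convexity of $\Jf$ off $\{\Jf=0\}$; then $\Pf=(\Jf)^\star$ is differentiable at every $y$ in the Legendre image of that set, which is exactly the complement of the ray-linearity zone. (You also never justify uniqueness of your $x_0$, though that part is easily repaired: $\pg f(x_0)=\pg f(x_1)=y$ forces $\nabla f(x_0)=\nabla f(x_1)$ via Lemma \ref{SecondPgLemma}, hence $x_0=x_1$ by injectivity of $\nabla f$.)
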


\begin{proof} 
Clearly $\dom (\Pf) = \{f= 0\}^{\circ} = \R^n$ and that $\{\Pf=\} = \dom(f)^{\circ} = \{0\}$. 

If $f$ is strictly convex and differentiable at
$x\in {\rm int}(\dom(f))$, then $f$ is polar differentiable at $x$. 
Indeed, by strict convexity of $f$ it cannot linear on $[0,x]$, which means $\Lf(\nabla f(x))\neq 0$. Thus by  Lemma 
\ref{lem:moment-map-gencase} the set $\psg f (x)$ is non empty, whereas by Lemma \ref{SecondPgLemma} 
it consists of at most one point. 

To show differentiability of $\Pf$, we consider the intermediate function $\Jf$.
By Lemma \ref{lem:JisF} 
\[{\rm int}({\epi} (\Jf )) = F({\rm int}({\epi} (f))).\]
and in particular (as $F$ is continuous) there can be no segments on the graph of $\Jf$ outside the set $\{(x,0): \Jf(x)=0\}$. This means that $\Jf$ is strictly convex at any point $x$ with $\Jf(x)\neq 0$. 
Therefore 
$\Pf = (\Jf)^{\star}$ is differentiable at any point $y$ such that $y=\nabla \Jf (x)$ for some $x$ with $\Jf(x)\neq 0$. This amounts precisely to $\Pf$ not being linear on $[0,y]$.

To get strict convexity of $\Pf$
we use that $\Jf$ is differentiable at any point with $\Jf(x)\neq 0$. Indeed, this follows from the remark after Lemma \ref{lem:JisF}, as any supporting $(n-1)$ dimensional region of $f$ is mapped via $F$ to 
a supporting $(n-1)$ dimensional region of $\Jf$ and vice versa. This means there is precisely one subgradient to $\Jf$ at any such point, and by Legendre theory there are no two points $y_1\neq y_2$ such that $Pf|_{[0,y_i]}$ is non-linear, in which $\Pf$ has the same gradient. That is, outside the ``ray-linearity zone'' of $\Pf$, it is strictly convex.    
\end{proof}

\section{Second order differentiability}\label{PolaritySecondOrderSec}

In this section we explain the relation between the Hessian of $f$ and the Hessian of $\Pf$. We shall mainly work in the following class of functions. 
\begin{definition}
Denote by ${\cal S}_2(\R^n)\subset  {\cal S}_1(\R^n)$ the class of  
twice continuously differentiable in $\RR^n \setminus \{0\}$ such that $\nabla^2 f(x)>0$ for all $x\neq 0$. 
\end{definition}

By Proposition \ref{prop:Pfisgood}   such functions are polar differentiable in $x\neq 0$. 
  In the following proposition we   derive a precise formula for the Hessian of $\Pf$ in terms of $\nabla^2 f$, at those points for which one can be sure $\Pf$ is twice differentiable.
\footnote{After presenting
the results from this article in the Cortona Convex Geometry Conference in
June 2011, we were informed by X.-N. Ma that
\eqref{HessianDetPolarityEq} was also obtained independently
by H.-Y. Jian, X.-J. Wang.
}
\footnote{We regard vectors $x\in \dom(f)$ as a column vector, $y\in \dom(\Pf)$ as row vectors, and the various differentials accordingly. For example,  the differential of $\Pf$, which is a function of $y$, is a matrix that is to be multiplied with vectors $v\in T_y(\RR^n)^\star$ from the right. 
When taking the differential of a function 
$G: X\to Y$ where points of $X$ are considered as column vectors and points of $Y$ as row vectors, we 
let $DG(x)$ act on $w\in T_x\RR^n$ by $(DG w)^T$, and similarly if $H: Y \to X$ 
(e.g., the kind of map $\nabla^{\circ}\Pf$ is) we let $DH$ act 
on a vector $v$ by $(vDH)^T$.}

\begin{prop}
\label{HessianPolarityProp} Assume $f\in {\cal S}_2(\R^n)$. Let $x\in \R^n\setminus \{0\}$ and  $y  =  \nabla^{\circ} f(x)$.  If $\Pf|_{[0,y]}$ is not linear then $\Pf$ is twice differentiable at $y$, $ \nabla^{2} \Pf(y)>0$ and we have
\begin{equation}
\label{HessianGeneralPolarityEq}
\left(f(x)\fpolar(y)\right)^2(\nabla^2\fpolar)(y) =
(f(x)I-x \nabla f(x))(\nabla^2 f(x))^{-1}(\fpolar(y) I- 
(\nabla\fpolar(y) y)^T),
\end{equation}
and
\begin{equation}
\label{eq:takedetlater}
(\nabla^2 f^{\circ}(y))^{-1}
=
{f(x)}f^{\circ}(y){\left( I -     {x}{y} \right)^T }
\nabla^2 f(x)
\left(I - {x}{y} \right).
\end{equation}
In particular,
\begin{equation}
\label{HessianDetPolarityEq} \det(\nabla^2 f (x)) \det (\nabla^2 \fpolar
(y)) = \frac{1}{(f(x)\fpolar(y))^{n+2}}.
\end{equation}
\end{prop}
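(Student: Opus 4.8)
The plan is to route the computation through the auxiliary function $\Jf=f^{\circ\star}=(\Pf)^{\star}$, which is related to $f$ by an explicit fractional-linear change of variables, and then to transfer the resulting Hessian identity to $\Pf$ via the classical Legendre formula \eqref{LegendreIdentitiesEq}. By Lemma \ref{lem:JDualEq} (whose ``moreover'' clause applies because $f\in{\cal S}_1$ is strictly convex, hence linear on no segment) one has the pointwise identity $\Jf\!\left(x/f(x)\right)=1/f(x)$ for all $x\neq0$. Writing $u(x):=x/f(x)$ and $q:=f(x)$, its Jacobian is $Du(x)=q^{-2}\bigl(f(x)I-x\nabla f(x)\bigr)$, with $\det Du(x)=-q^{-(n+1)}\bigl(\langle x,\nabla f(x)\rangle-f(x)\bigr)=-q^{-(n+1)}\Lf(\nabla f(x))\neq0$ by strict convexity; thus $u$ is a local $C^{2}$-diffeomorphism near $x$, and so $\Jf=(1/f)\circ u^{-1}$ is $C^{2}$ near $\bar u:=x/f(x)$.

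First I would record the first-order data. From Lemma \ref{SecondPgLemma} and Corollary \ref{SecondPgCor}, the point $y=\nabla^{\circ}f(x)$ satisfies $y=\nabla f(x)/\Lf(\nabla f(x))$, $\Pf(y)=1/\Lf(\nabla f(x))=1/\bigl(\langle x,\nabla f(x)\rangle-f(x)\bigr)$, $\nabla\Jf(\bar u)=y$, and the ``orthogonality'' relation $\langle x,y\rangle=1+f(x)\Pf(y)$ (the last being just the defining equation of $\psg f(x)$). Moreover, since $f\in{\cal S}_1$ forces $\Jf$ to be strictly convex at $\bar u$ (remark after Lemma \ref{lem:JisF}, as $\Jf(\bar u)\neq0$), the point $\bar u$ is the unique maximizer of $u\mapsto\langle u,y\rangle-\Jf(u)$, so $\Pf=(\Jf)^{\star}$ is differentiable at $y$ with $\nabla\Pf(y)=\bar u=x/f(x)$. (The hypothesis ``$\Pf|_{[0,y]}$ not linear'' is built in here; it may also be checked directly, since $\langle\nabla\Pf(y),y\rangle=\langle x,y\rangle/f(x)=1/f(x)+\Pf(y)>\Pf(y)$ precludes radial linearity of $\Pf$ along $[0,y]$.)

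Next comes the core calculation: differentiating $\Jf(u(x))=1/f(x)$ twice by the chain rule,
\[
\nabla^{2}\!\bigl(1/f\bigr)(x)=Du(x)^{T}\,\nabla^{2}\Jf(\bar u)\,Du(x)+\sum_{k}\partial_{k}\Jf(\bar u)\,\nabla^{2}u_{k}(x),
\]
and substituting $\nabla\Jf(\bar u)=y=\Pf(y)\,\nabla f(x)$ together with $\langle x,\nabla\Jf(\bar u)\rangle=1+f(x)\Pf(y)$ on the right. After the rank-one terms cancel, this collapses to
\[
\bigl(f(x)I-x\nabla f(x)\bigr)^{T}\,\nabla^{2}\Jf(\bar u)\,\bigl(f(x)I-x\nabla f(x)\bigr)=f(x)^{3}\Pf(y)\,\nabla^{2}f(x).
\]
Since $\nabla^{2}f(x)>0$ and $f(x)I-x\nabla f(x)$ is invertible, $\nabla^{2}\Jf(\bar u)>0$; hence by \eqref{LegendreIdentitiesEq} applied to $\Pf=(\Jf)^{\star}$, the function $\Pf$ is twice differentiable at $y$ with $\nabla^{2}\Pf(y)=\bigl(\nabla^{2}\Jf(\bar u)\bigr)^{-1}>0$. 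Finally I would rewrite $f(x)I-x\nabla f(x)=f(x)(I-xy)^{-1}$, using $\nabla f(x)=y/\Pf(y)$ and the identity $(I-xy)^{-1}=I-\tfrac{1}{f(x)\Pf(y)}\,xy$ (which follows from $(xy)^{2}=\langle x,y\rangle\,xy$ and $\langle x,y\rangle-1=f(x)\Pf(y)$); plugging this in gives \eqref{eq:takedetlater}, and inverting it while re-expressing $x/f(x)=\nabla\Pf(y)$ gives \eqref{HessianGeneralPolarityEq}. The determinant formula \eqref{HessianDetPolarityEq} then follows by taking $\det$ in \eqref{eq:takedetlater}: the matrix determinant lemma gives $\det(I-xy)=1-\langle x,y\rangle=-f(x)\Pf(y)$, whence $\bigl(\det\nabla^{2}\Pf(y)\bigr)^{-1}=(f(x)\Pf(y))^{n}\,\bigl(\det(I-xy)\bigr)^{2}\,\det\nabla^{2}f(x)=(f(x)\Pf(y))^{n+2}\det\nabla^{2}f(x)$.

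The main obstacle is the two-fold differentiation of $\Jf(x/f(x))=1/f(x)$ and the ensuing algebra leading to the compact identity above: one must keep the column/row conventions straight and verify that the several rank-one contributions — notably the Hessians $\nabla^{2}u_{k}$ of the components of the fractional-linear substitution — cancel exactly, leaving only a scalar multiple of $\nabla^{2}f(x)$. A secondary point needing care is the regularity bookkeeping, i.e.\ that $\Jf$, and hence $\Pf$, is genuinely twice differentiable at the point in question, which is where strict convexity of $f$, the differentiability transfer of Lemma \ref{lem:JisF}, and Legendre duality are combined.
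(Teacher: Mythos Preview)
Your argument is correct, but it follows a genuinely different route from the paper's. The paper works directly with the pair of first-order relations $\nabla f(x)=y/\Pf(y)$ and $\nabla\Pf(y)=x/f(x)$ from Lemma~\ref{SecondPgLemma}: it differentiates the first in $y$ (treating $x=\nabla^{\circ}\Pf(y)$) to obtain an expression for the Jacobian of $\nabla^{\circ}\Pf$, invokes the inverse function theorem to transfer differentiability to $y=\nabla^{\circ}f(x)$, and then differentiates the second identity in $x$; combining the two gives \eqref{HessianGeneralPolarityEq} directly, from which \eqref{eq:takedetlater} and \eqref{HessianDetPolarityEq} are extracted by the rank-one inversion $(I-xy/(f\Pf))^{-1}=I-xy$. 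You instead detour through the scalar identity $\Jf(x/f(x))=1/f(x)$ of Lemma~\ref{lem:JDualEq}, differentiate it twice via the chain rule, and only then invoke the classical Legendre formula $\nabla^{2}\Pf(y)=(\nabla^{2}\Jf(\bar u))^{-1}$. Your approach has the conceptual advantage that the key differentiation is of a single scalar equation, and that all the regularity transfer to $\Pf$ is packaged into the well-known Legendre step; the price is the somewhat heavier bookkeeping in the second chain-rule expansion (the $\sum_{k}\partial_{k}\Jf\,\nabla^{2}u_{k}$ term), where one must use both $\nabla\Jf(\bar u)=\Pf(y)\nabla f(x)$ and $\langle x,y\rangle=1+f(x)\Pf(y)$ to see the rank-one pieces cancel. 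The paper's route avoids $\Jf$ entirely in the proof and gets \eqref{HessianGeneralPolarityEq} first (your order is reversed); interestingly, the paper records your intermediate identity $\nabla^{2}\Jf(x/f(x))=f(x)\Pf(y)(I-xy)^{T}\nabla^{2}f(x)(I-xy)$ only afterward, as a corollary.
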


\begin{proof}
By equation \eqref{eq:domdual}, the domain of $\Pf$ is $\R^n$ and it vanishes only at $0$. By Proposition \ref{prop:Pfisgood} the function $f^{\circ}$ is
differentiable at $y$. Thus $\nabla^{\circ} \Pf (y) = x$. 
By Lemma \ref{SecondPgLemma},  
\begin{equation}\label{eq-diffpfofx} \nabla \Pf (y) = \frac{x}{f(x)}, \quad{\rm and}\quad \grad f(x) = \frac{y}{f^{\circ}(y)}.
\end{equation}
The second equation implies
  $x = \nabla^{\circ}\Pf(y)$ is differentiable with
respect to $y$ and then the first that  
$\Pf$ is 
 twice differentiable.

Differentiating the second identity of \eqref{eq-diffpfofx} gives
\[ \nabla \nabla^{\circ}\Pf(y)  (\nabla^2 f)(x)  = \frac{1}{f^{\circ}(y)^2}\left[
f^{\circ}(y)I -  \grad f^{\circ}(y) y \right], \] 
where we denoted 
the differential of the map $x(y) = \nabla^{\circ}\Pf (y)$ by $\nabla \nabla^{\circ}\Pf(y)$.
Recall that for $\iprod{w}{z}\neq 1$ one has 
\[ \left( I - zw^T\right)^{-1} = I + \frac{zw^T}{1-\iprod{w}{z}},\]  
so that as $ \grad \Pf(y)/\Pf(y)= x$ and $\iprod{x}{y}\neq 1$  
(that is implied by $f(x)\neq 0$), 
$f^{\circ}(y)I - \grad f^{\circ} (y)y$ is invertible. 
As $f$ is strongly convex it follows that 
$\nabla \nabla^{\circ}\Pf(y)$ is positive-definite. 
As $ \nabla^{\circ}f \circ \nabla^{\circ}\Pf  =
Id$ the inverse function theorem implies
that $\nabla^{\circ}\Pf$ is differentiable at $y$
and that $ \nabla \nabla^{\circ} f^{\circ}(y) \left(\nabla
\nabla^{\circ}f(x)\right)^T = Id$. 
Thus, $y=\pg f(x)$ is differentiable in $x$,
and the first identity of \eqref{eq-diffpfofx} gives
\[  
\nabla \nabla^{\circ} f(x) = \frac{1}{f(x)^2}(\nabla^2
f^{\circ}(y))^{-1}\left[ f(x)I - x  \grad f(x)\right].
\]
which, after simplification,  
proves \eqref{HessianGeneralPolarityEq}.
Similarly, $f(x)I - x \grad f (x)$ is invertible. 
We re-write, using Lemma \ref{SecondPgLemma}, 
\eqref{HessianGeneralPolarityEq}
as
\[  f(x)f^{\circ}(y)(\nabla^2 f^{\circ})(y)= \left(  I - \frac{x}{f(x)}\frac{y}{f^{\circ}(y)}\right) (\nabla^2
f(x))^{-1}\left( I - (\frac{x}{f(x)}\frac{y}{f^{\circ}(y)})^T
\right) .
\]

We invert as above, using that $
{\langle x, y\rangle - f(x)f^{\circ}(y)}
=
1.
$, to get
\[ (I -  \frac{x}{f(x)}\frac{y}{f^{\circ}(y)})^{-1} 
=I - {xy},
\]
since

Thus, \eqref{eq:takedetlater} follows.
Finally,  $\det (I -  x y)=1- \langle x, y\rangle= -f(x)f^{\circ}(y).$
Thus,
\eqref{HessianDetPolarityEq} follows by
taking determinants in \eqref{eq:takedetlater}.
\end{proof}

One may readily derive  similar formulas relating the Hessians
of $f$ at $x$ and $\Jf$ at $x/f(x)$ under appropriate regularity
assumptions, for example:
$$
\nabla^2 \Jf ({\frac x{f(x)}})
=
(\nabla^2 f^{\circ})^{-1}(y)
=
f(x)\Pf(y)(I-xy)^T\nabla^2 f(x)(I-xy),
$$
where $\nabla^{\circ}f(x) = y$.
We omit the calculations.

\section{Variation of polarity}
\label{PolarityVariationSec}

In this section we consider one-parameter families $\{f_t(x)\}_{t\in\RR}$ of
convex functions.

\subsection{First variation}

  The well-known first variation formula for the Legendre
transform is:

\begin{prop}
\label
{FirstVarLProp}
Let $u(t,x)\in C^2(\RR\times\RR^n)$ with $u_t(\,\cdot\,) = u(t,\,\cdot\,)\in {\cal S}_2$
for each $t$.
Denote by $w(t,y)$ the
Legendre transforms of $u_t(x) = u(t,x)$ in the space variable, that
is,
$w(t,y) = \sup_{x\in \R^n} \left[ \langle y, x \rangle -
u(t,x)\right].$ Then,
\begin{equation}
\label{LegendreFirstVariation} \frac{\partial w}{\partial t}{(t, y)}
= -\frac{\partial u}{\partial t}(t,(\nabla u_t)^{-1}(y)).
\end{equation}
\end{prop}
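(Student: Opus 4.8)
The plan is to differentiate the defining relation for the Legendre transform using the envelope theorem, the same way one handles parameter-dependent suprema. First I would fix $y$ and consider, for each $t$, the point $x(t,y) := (\nabla u_t)^{-1}(y)$ at which the supremum $w(t,y) = \sup_x[\langle y,x\rangle - u(t,x)]$ is attained; since $u_t\in{\cal S}_2$ this point is unique, and by the inverse function theorem applied to $(t,x)\mapsto \nabla u(t,x)$ (whose $x$-derivative $\nabla^2 u_t(x)$ is positive-definite, hence invertible) the map $t\mapsto x(t,y)$ is $C^1$. Then $w(t,y) = \langle y, x(t,y)\rangle - u(t,x(t,y))$.

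Next I would simply apply the chain rule to this expression, differentiating in $t$:
\[
\frac{\partial w}{\partial t}(t,y)
= \Big\langle y, \frac{\partial x}{\partial t}(t,y)\Big\rangle
- \frac{\partial u}{\partial t}(t,x(t,y))
- \Big\langle \nabla u_t(x(t,y)), \frac{\partial x}{\partial t}(t,y)\Big\rangle.
\]
The key point is that the first-order condition at the maximizer reads $\nabla u_t(x(t,y)) = y$, so the first and third terms cancel exactly, leaving $\frac{\partial w}{\partial t}(t,y) = -\frac{\partial u}{\partial t}(t,x(t,y)) = -\frac{\partial u}{\partial t}(t,(\nabla u_t)^{-1}(y))$, which is \eqref{LegendreFirstVariation}.

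This is essentially the envelope theorem, so the only genuine work is the regularity bookkeeping rather than any real obstacle: one must justify that $x(t,y)$ depends differentiably (indeed $C^1$) on $t$ so that the chain rule is legitimate. I would get this from the $C^2$ assumption on $u$ together with $\nabla^2 u_t > 0$ via the implicit function theorem applied to the relation $\nabla u(t,x) - y = 0$. One could alternatively avoid differentiating $x(t,y)$ altogether by a standard soft argument: for fixed $t_0$, $w(t,y) \ge \langle y, x(t_0,y)\rangle - u(t,x(t_0,y))$ with equality at $t=t_0$, so the difference $w(t,y) - [\langle y,x(t_0,y)\rangle - u(t,x(t_0,y))]$ has a minimum (value $0$) at $t=t_0$; differentiating the minorant gives one-sided bounds on $\partial_t w$ that pinch to $-\partial_t u(t_0, x(t_0,y))$. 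I expect the cancellation of the boundary terms to be the conceptual heart, and everything else to be routine, so I would present the chain-rule computation as the main line and invoke the implicit function theorem for the differentiability of the maximizer.
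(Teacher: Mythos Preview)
Your proof is correct and matches the paper's argument essentially line for line: the paper also writes $w(t,y)=\langle y,x(t,y)\rangle-u(t,x(t,y))$ with $x(t,y)=(\nabla_x u)^{-1}(y)$, differentiates in $t$, and uses $\nabla u(t,x)=y$ to cancel the terms involving $\partial x/\partial t$. Your additional remarks on justifying the differentiability of $x(t,y)$ via the implicit function theorem (and the alternative soft minorant argument) are more careful than the paper, which simply performs the computation.
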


For the proof, take a variation in $t$ of \eqref{LegendreExactFormulaEq},
with $x=x(t,y)=(\nabla_x u)^{-1}(y)$,
$$
\frac{\pa w}{\pa t}\Big|_y
=
\frac{dw}{dt}\Big|_y
=
\sum_{j=1}^n y_j\frac{\pa x_j}{\pa t}
-\frac{\pa u}{\pa t}\Big|_{x}
-\sum_{j=1}^n\frac{\pa u}{\pa x_j}\frac{\pa x_j}{\pa t}
=-\frac{\pa u}{\pa t}\Big|_{x},
$$
since $\nabla u(t,x)=y$.

The corresponding result for polarity is the following.\footnote{We shall use $\nabla$ and $\nabla^{\circ}$ to denote differentiation and polar differentiation with respect to the space variables.}

\begin{prop}{\rm (First variation of polarity) }\label{FirstVariationPProp}
Let $u(t,x)\in C^2(\RR\times(\RR^n\setminus \{0\}))$ with $u_t(\,\cdot\,) = u(t,\,\cdot\,)\in {\cal S}_2$ for each $t$.
Denote by $w_t=w(t,\,\cdot\,)=u_t^\circ$ the fiberwise polar.
Then for any $t$, and any $y$ such that $u_t$ is not linear on $[0,y]$,  
$$
\left(\frac{\partial }{\partial t}\log w\right){(t,y)} =
-\left(\frac{\partial}{\partial t}\log u\right){(t,\nabla^{\circ} w(t,y))}.
$$
\end{prop}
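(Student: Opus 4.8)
The strategy mirrors the proof of the Legendre first-variation formula (Proposition \ref{FirstVarLProp}), but with the exact formula \eqref{LegendreExactFormulaEq} replaced by the exact formula for polarity coming from Lemma \ref{SecondPgLemma}. The plan is to use the representation
$w(t,y) = f^\circ_t(y) = \dfrac{1}{\Lf_t(\nabla u_t(x))} = \dfrac{1}{\langle x, \nabla u_t(x)\rangle - u_t(x)}$,
valid at any point $y$ with $x = \nabla^\circ w_t(y)$ and $u_t$ not linear on $[0,y]$, where $\nabla u_t(x) = y/w(t,y)$. Equivalently, and more symmetrically, I would start from the identity
$u(t,x)\, w(t,y) = \langle x, y\rangle - 1$
(which is the defining relation of $y \in \psg u_t(x)$, i.e. the content of \eqref{eq-psg} together with Lemma \ref{SecondPgLemma}), holding whenever $x = \nabla^\circ w_t(y)$ and $y = \nabla^\circ u_t(x)$.

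First I would fix $t$ and a point $y$ with $u_t$ not linear on $[0,y]$; by Proposition \ref{prop:Pfisgood} and Proposition \ref{HessianPolarityProp}, for nearby $(t',y')$ the point $x = x(t',y') := \nabla^\circ w_{t'}(y')$ is well-defined and depends $C^1$ on its arguments (this uses $u \in C^2$ and $\nabla^2 u_t > 0$, so the inverse function theorem applies exactly as in the proof of Proposition \ref{HessianPolarityProp}). Next I would differentiate the relation $u(t,x(t,y))\, w(t,y) = \langle x(t,y), y\rangle - 1$ with respect to $t$ at fixed $y$. Writing $\dot x = \partial x/\partial t$, the left side gives $(\partial_t u + \langle \nabla u, \dot x\rangle) w + u\, \partial_t w$, and the right side gives $\langle \dot x, y\rangle$. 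Now substitute $\nabla u(t,x) = y/w(t,y)$ (the second equation of \eqref{eq-diffpfofx}): the term $\langle \nabla u, \dot x\rangle\, w = \langle y/w, \dot x\rangle\, w = \langle \dot x, y\rangle$ cancels exactly against the right-hand side. What remains is $(\partial_t u)\, w + u\, (\partial_t w) = 0$, i.e. $\partial_t w / w = -\, \partial_t u / u$, where the right-hand side is evaluated at $x = x(t,y) = \nabla^\circ w(t,y)$. Rewriting $\partial_t w/w = \partial_t \log w$ and similarly for $u$ yields the claimed formula.

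The main thing to be careful about — the analogue of the obstacle in these variational arguments — is the regularity and non-degeneracy needed to (a) make $x(t,y) = \nabla^\circ w_t(y)$ a $C^1$ function near the given point, and (b) justify that $w(t,y) = u_t^\circ(y)$ is itself differentiable in $t$. Both follow from the hypotheses $u \in C^2(\RR \times (\RR^n \setminus\{0\}))$ with $u_t \in \mathcal{S}_2$ and from the assumption that $u_t$ is not linear on $[0,y]$: the latter guarantees (via Proposition \ref{prop:Pfisgood}(3) and Proposition \ref{HessianPolarityProp}) that $w_t$ is twice differentiable and strictly convex at $y$, hence $\nabla^\circ w_t$ is a genuine $C^1$ inverse of $\nabla^\circ u_t$ near the relevant points, and that the key identities $\nabla u(t,x) = y/w(t,y)$ and $u(t,x)w(t,y) = \langle x,y\rangle - 1$ from Lemma \ref{SecondPgLemma} and \eqref{eq-diffpfofx} are available throughout a neighborhood. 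Once this is in place the computation is a one-line cancellation exactly as above; I would include a sentence noting that the open condition ``$u_{t'}$ not linear on $[0,y']$'' persists for $(t',y')$ near $(t,y)$, so that differentiation under these formulas is legitimate. An alternative, essentially equivalent route is to differentiate $w(t,y) = 1/\big(\langle x, \nabla u_t(x)\rangle - u_t(x)\big)$ directly using the Legendre first-variation computation on the inner expression; this also works but is slightly messier, so I would prefer the symmetric identity $u w = \langle x,y\rangle - 1$.
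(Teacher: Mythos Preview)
Your proof is correct and takes a genuinely simpler route than the paper's. The paper follows exactly the alternative you mention at the end and call ``slightly messier'': it differentiates $w(t,y)^{-1} = \langle x, \nabla u(t,x)\rangle - u(t,x)$, then explicitly computes $\partial x/\partial t$ from $x(t,y) = (\nabla u_t)^{-1}\big(y/w(t,y)\big)$, and only after about a page of manipulations shows that all the terms involving $\dot x$ cancel. Your choice of the symmetric identity $u(t,x)\,w(t,y) = \langle x,y\rangle - 1$ makes this cancellation immediate: since $\nabla u(t,x) = y/w(t,y)$, the chain-rule contribution $\langle \nabla u,\dot x\rangle\, w$ on the left equals $\langle \dot x, y\rangle$ on the right, and no explicit formula for $\dot x$ is ever needed. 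The regularity justification (inverse function theorem via $u_t \in \mathcal{S}_2$, together with Propositions \ref{prop:Pfisgood} and \ref{HessianPolarityProp}) is essentially the same in both arguments. One small point: the non-linearity hypothesis you (following the statement) place on $u_t$ is vacuous, since $u_t \in \mathcal{S}_2$ is strictly convex; what is actually needed---and what the paper's own proof and your invocations of Propositions \ref{prop:Pfisgood}(3) and \ref{HessianPolarityProp} use---is that $w_t = u_t^\circ$ is not linear on $[0,y]$.
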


\begin{proof}
Let $y\in\Im \del^\circ u_t$. 
Proposition \ref{prop:Pfisgood} implies that 
for every $t$ and every $y$ such that $w_t$ is not linear on $[0,y]$, $w_t$ is polar differentiable at $y$. 
Denote $x(t,y)= \nabla^{\circ} w(t,y)$.
Since $u_t$ is differentiable, by Lemma \ref{SecondPgLemma} 
$y=\nabla^\circ u_t(x)$, and 
\[
w(t,y)^{-1}= {\langle x(t,y), \nabla
u(t,x(t,y))\rangle-u(t,x(t,y))}.
\]
Differentiating with
respect to $t$ gives
\begin{equation}
\begin{aligned}
\label{FirstVarFirstEq}
-\frac{1}{w^2(t,y)} \frac{\partial}{\partial t}w(t,y)
= &
-
\frac{\partial u}{\partial t} (t, x(t,y))
+\Big\langle x(t,y), (\grad \frac{\partial}{\partial t}u)(t,
x(t,y))\Big\rangle \\& +   \Big\langle x(t,y),\nabla^2 u (t,x(t,y))
\frac{\partial x}{\partial t}(t,y)\Big\rangle
\end{aligned}
\end{equation}
(where two terms have cancelled). 
By Lemma \ref{lem:moment-map-gencase}, 
$x(t,y) = (\nabla u_t)^{-1}(y/w(t,y))$, therefore
\begin{eqnarray*}
\frac{\partial}{\partial t}{x}(t,y)
&=&
(\frac{\partial}{\partial
t} (\grad u_t)^{-1}) \left( \frac{y}{w(t,y)}\right) + 
( \nabla_y (\grad u_t)^{-1})(y /w(t,y))
\frac{\partial}{\partial t} \left(\frac{y}{w(t,y)}\right).
\end{eqnarray*}
By the chain rule, $ (\nabla_y ( \grad
u_t)^{-1}) (y /w(t,y)) = (\nabla^2 \, u_t)^{-1}
(x(t,y))$.
Thus the last term in (\ref{FirstVarFirstEq}) equals
\[ \Big\langle x(t,y), \nabla^2 (u(t,x(t,y))) (\frac{\partial}{\partial
t} (\grad u_t)^{-1}) \left(\frac{y}{w(t,y)}\right) \Big\rangle + \Big\langle x(t,y),
\frac{\partial}{\partial t} \left(\frac{y}{w(t,y)}\right) \Big\rangle. \]
Plugging everything back into the original equation yields
\begin{eqnarray*}
-\frac{1}{w^2(t,y)} \frac{\partial}{\partial t}w(t,y) & = &
\Big\langle x(t,y), (\grad \frac{\partial u}{\partial t}) (t,
x(t,y))\Big\rangle \\ &&\quad+\;
\Big\langle x(t,y), (\nabla^2 u(t,x(t,y))) (\frac{\partial}{\partial t}
(\grad u_t)^{-1}) (\frac{y}{w(t,y)}) \Big\rangle \\
&&\quad-\;
\Big\langle x(t,y),   \frac{y}{w^2(t,y)} \frac{\partial w}{\partial t}(t,y) \Big\rangle
- \frac{\partial u}{\partial t} (t, x(t,y)),
\end{eqnarray*}
or, 
\begin{equation}
\label
{OneMoreFirstVarEq}
\begin{aligned}
\frac{\langle x(t,y), y\rangle -1}{w^2(t,y)}
\frac{\partial}{\partial t}w(t,y) & = \Big\langle x(t,y),
(\grad\frac{\partial u}{\partial t}) (t, x(t,y))\Big\rangle -
\frac{\partial {u}}{\partial t}(t, x(t,y)) \\ &\quad+\;
\Big\langle x(t,y), \nabla^2 u(t,x(t,y)) \left(\frac{\partial}{\partial t}
(\grad u_t)^{-1}\right) (\frac{y}{w(t,y)}) \Big\rangle.
\end{aligned}
\end{equation}
Since $(\grad u_t)^{-1} (\grad u_t (x))= x$ 
\[ \left(\frac{\partial}{\partial t} (\grad u_t)^{-1}\right)( \grad u_t (x)) +
 \iprod{\nabla(\grad u_t)^{-1}(\grad
u_t (x))}{ \frac{\partial}{\partial t} \left(\grad u_t\right) }      =
0\] or,
\[ (\frac{\partial}{\partial t} (\grad u_t)^{-1})( \grad u_t (x)) =
-  \nabla^2\, u_t (x))^{-1} \frac{\partial}{\partial t} \grad u_t
(x).
\]
Using that 
$\grad u_t (x(t,y)) = \frac{y}{w(t,y)}$, 
the first and third term on the right hand side of \eqref{OneMoreFirstVarEq}
cancel.
The result now follows 
since
$\langle x(t,y), y\rangle -1 = u(t,x(t,y))w(t,y)$.
\end{proof}

One may readily combine Propositions
\ref{FirstVarLProp} and \ref{FirstVariationPProp} to get a similar formula for the first variation of $\Jf$. Under the appropriate regularity condition it reads
$$
\frac{\partial w}{\partial t} (t,x/{u_t(x)})
=
\frac{1}{u_t(x)u_t^{\star}(\nabla u_t(x))}\frac{\partial u}{\partial t}(t,x),
$$
where $w(t,y) = u_t^{\star\circ}(y)$.

\subsection{Second variation}

We recall the well-known formula for the second variation of the Legendre transform. 
Its proof follows immediately upon differentiating \eqref{LegendreFirstVariation}
(see, e.g., \cite[p. 87]{R}).

\begin{prop}
\label {LegendreSecondVarProp} 
Let $u(t,x)\in C^2(\RR\times\RR^n)$ with $u_t(\,\cdot\,) = u(t,\,\cdot\,)\in {\cal S}_2$ for each $t$.
 Let $w(t,y)=u_t^\star(y)$.
Then
\begin{equation}
\label{LegendreSecondVarOneEq} \frac{\partial^2 w}{\partial t^2}{(t,
y)} = -\frac{\partial^2 u}{\partial t^2}{(t, (\nabla u_t)^{-1}(y))}
-
\left\langle \nabla \frac{\partial u}{\partial t}(t,{(\nabla
u_t)^{-1}(y)}), \nabla \frac{\partial w}{\partial t}(t,y)\right\rangle,
\end{equation}
or equivalently
\begin{equation}
\label{LegendreSecondVarTwoEq}\frac{\partial^2 w}{\partial t^2}{(t,
y)} = -\frac{\partial^2 u}{\partial t^2}  + \langle \nabla
\frac{\partial u}{\partial t}, (\nabla^2 u_t )^{-1}\nabla
\frac{\partial u}{\partial t} \rangle
\end{equation}
where the right hand side is evaluated at $(t,(\nabla
u_t)^{-1}(y))$.
\end{prop}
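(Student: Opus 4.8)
The plan is to obtain the second variation formula for the Legendre transform by directly differentiating the first variation formula \eqref{LegendreFirstVariation} with respect to $t$. Recall that Proposition \ref{FirstVarLProp} gives $\partial_t w(t,y) = -\partial_t u(t, x(t,y))$, where $x(t,y) = (\nabla u_t)^{-1}(y)$ is the inverse-gradient map. The right-hand side is a composition of the function $(t,x)\mapsto \partial_t u(t,x)$ with the map $t\mapsto (t,x(t,y))$ (for $y$ fixed), so differentiating once more in $t$ is a straightforward application of the chain rule.

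First I would differentiate $-\partial_t u(t,x(t,y))$ in $t$, producing two terms: $-\partial^2_t u(t,x(t,y))$ from the explicit $t$-dependence, and $-\langle \nabla \partial_t u(t,x(t,y)), \partial_t x(t,y)\rangle$ from the dependence through the base point. To identify $\partial_t x(t,y)$, I would differentiate \eqref{LegendreFirstVariation} in the space variable $y$: since $\partial_t w(t,y) = -\partial_t u(t,x(t,y))$ and $\nabla_y[\partial_t u(t,x(t,y))] = (Dx)^T \nabla\partial_t u$, while the Legendre identity $\nabla_y w = x(t,y)$ gives $Dx = \nabla^2 w_t(y) = (\nabla^2 u_t)^{-1}(x)$, one gets $\nabla_y \partial_t w(t,y) = -(\nabla^2 u_t)^{-1}(x)\,\nabla\partial_t u(t,x)$, equivalently $\partial_t x(t,y) = \nabla \partial_t w(t,y)$. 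Substituting this into the second term yields \eqref{LegendreSecondVarOneEq}. Then substituting instead the expression $\partial_t x(t,y) = -(\nabla^2 u_t)^{-1}\nabla\partial_t u$ (which follows by differentiating $(\nabla u_t)^{-1}(\nabla u_t(x)) = x$ exactly as in the proof of Proposition \ref{FirstVariationPProp}) gives the equivalent form \eqref{LegendreSecondVarTwoEq}.

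There is essentially no obstacle here: the argument is routine once the first variation formula is in hand, and the only subtlety is the bookkeeping of which variable is held fixed when differentiating (the point $x(t,y)$ moves with $t$ for fixed $y$). The regularity hypothesis $u_t\in {\cal S}_2$ for each $t$, together with $u\in C^2(\RR\times\RR^n)$, guarantees that all the maps involved — in particular $(\nabla u_t)^{-1}$ and its $t$-derivative — are $C^1$ in the relevant variables, so the chain rule applications are all legitimate. As noted, this is precisely the computation carried out in \cite[p. 87]{R}, so I would simply remark that the proof follows immediately from differentiating \eqref{LegendreFirstVariation} and refer the reader there for the details.
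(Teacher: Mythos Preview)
Your proposal is correct and follows exactly the approach the paper indicates: the paper states only that the result ``follows immediately upon differentiating \eqref{LegendreFirstVariation} (see, e.g., \cite[p. 87]{R}),'' which is precisely what you do. Your write-up in fact supplies more detail than the paper itself, but the underlying argument is identical.
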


For polarity we have:

\begin{thm} {\rm (Second variation of polarity)}
\label{SecondVariationPolarityThm}
Let $u(t,x)\in C^2(\RR\times(\RR^n\setminus \{0\}))$ with $u_t(\,\cdot\,) = u(t,\,\cdot\,)\in {\cal S}_2$
for each $t$.
Denote by $w_t=w(t,\,\cdot\,)=u_t^\circ$ the fiberwise polar.
Then for every $t$ and $y$ such that $w_t$ is not linear on $[0,y]$ we have  
\begin{equation}
\label
{logNoMAMatrixEq}
\left(\frac{\partial^2 }{\partial t^2}\log w\right){(t,y)}   =
-\left(\frac{\partial^2}{\partial t^2}\log u\right){ } + u \left\langle
\nabla \frac{\partial}{\partial t}(\log u) , (\nabla^2 u)^{-1} \nabla
\frac{\partial }{\partial t}(\log u)  \right\rangle, 
\end{equation}
where the right hand side is evaluated at $(t,\nabla^{\circ}{w_t}(y))$.
Equivalently,
\begin{equation}
\label
{logMAMatrixEq}
\frac{\ddot w}w
\Big|_{(t,y)}
=
-\frac1u\det
\left(
\begin{array}{cccc}
-u^2\ddot{(1/u)}  &   & u\nabla(\dot u/u) &   \\
(u\nabla(\dot u/u))^T &   & \nabla^2_x u  &  \\
\end{array}
\right)\bigg|_{(t,\pg w(t)(y))}.
\end{equation}
\end{thm}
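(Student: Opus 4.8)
The natural approach is to differentiate the first-variation formula (Proposition \ref{FirstVariationPProp}) with respect to $t$, exactly as the second-variation formula for the Legendre transform is obtained by differentiating \eqref{LegendreFirstVariation}. Write $\Phi(t,y):=\big(\tfrac{\partial}{\partial t}\log w\big)(t,y)= -\big(\tfrac{\partial}{\partial t}\log u\big)(t,x(t,y))$, where $x(t,y)=\nabla^\circ w(t,y)$. Differentiating once more in $t$ gives
\[
\Big(\frac{\partial^2}{\partial t^2}\log w\Big)(t,y)
= -\Big(\frac{\partial^2}{\partial t^2}\log u\Big)(t,x(t,y))
- \Big\langle \nabla\frac{\partial}{\partial t}(\log u)(t,x(t,y)),\, \frac{\partial x}{\partial t}(t,y)\Big\rangle .
\]
So the whole problem reduces to computing $\partial x/\partial t$ and showing it equals $u\,(\nabla^2 u)^{-1}\nabla\tfrac{\partial}{\partial t}(\log u)$ evaluated at $(t,x(t,y))$.

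\textbf{Key steps.} First I would record, from Lemma \ref{SecondPgLemma} and \eqref{eq-diffpfofx}, the two identities $\nabla u(t,x(t,y)) = y/w(t,y)$ and (equivalently) $\nabla w(t,y)=x(t,y)/u(t,x(t,y))$, valid wherever $w_t$ is not linear on $[0,y]$; this is where Proposition \ref{prop:Pfisgood} guarantees the needed differentiability. Then differentiate $\nabla u(t,x(t,y)) = y/w(t,y)$ in $t$:
\[
\frac{\partial}{\partial t}\big(\nabla u\big)(t,x(t,y)) + \nabla^2 u(t,x(t,y))\,\frac{\partial x}{\partial t}(t,y)
= -\frac{y}{w^2}\,\frac{\partial w}{\partial t}(t,y).
\]
Using $y = w\,\nabla u$ and $\tfrac{\partial}{\partial t}\log w = -\tfrac{\partial}{\partial t}\log u$ from Proposition \ref{FirstVariationPProp}, the right-hand side becomes $+\,\nabla u \cdot \tfrac{\partial}{\partial t}(\log u)$, all evaluated at $(t,x(t,y))$. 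Solving for $\partial x/\partial t$ and combining $\tfrac{\partial}{\partial t}\nabla u = \nabla\tfrac{\partial u}{\partial t}$ with $u\,\nabla\tfrac{\partial}{\partial t}(\log u) = \nabla\tfrac{\partial u}{\partial t} - \tfrac{\dot u}{u}\nabla u$ should collapse the bracket so that
\[
\frac{\partial x}{\partial t}(t,y) = u\,(\nabla^2 u)^{-1}\,\nabla\frac{\partial}{\partial t}(\log u)\Big|_{(t,x(t,y))}.
\]
Substituting this into the first display yields \eqref{logNoMAMatrixEq}. Finally, \eqref{logMAMatrixEq} follows from \eqref{logNoMAMatrixEq} by the standard Schur-complement identity for block determinants: expanding the $(n+1)\times(n+1)$ determinant along its first row/column gives $\det(\nabla^2 u)\cdot\big(-u^2\ddot{(1/u)} - \langle u\nabla(\dot u/u),(\nabla^2 u)^{-1}u\nabla(\dot u/u)\rangle\big)$, and one checks that $-u^2\ddot{(1/u)}$ differs from $-\tfrac{\partial^2}{\partial t^2}\log u$ by a term that is absorbed; more precisely $-u\,\ddot{(1/u)} = \tfrac{\ddot u}{u} - 2\big(\tfrac{\dot u}{u}\big)^2$ while $\tfrac{\partial^2}{\partial t^2}\log u = \tfrac{\ddot u}{u} - \big(\tfrac{\dot u}{u}\big)^2$, and one verifies these reconcile after dividing the determinant by $u$ and accounting for the scalar prefactors $u$ in the off-diagonal blocks — a bookkeeping check with the scalar entry.

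\textbf{Main obstacle.} The genuine computation is light; the delicate points are (a) justifying that $x(t,y)$ is $C^1$ in $t$ on the stated domain — this rests on $\Pf$ being twice differentiable there, which is exactly Proposition \ref{HessianPolarityProp}, so invertibility of $\nabla^2 u$ and of the relevant $I-xy$ type factors is available — and (b) the algebra reconciling the scalar $(1,1)$-entry $-u^2\ddot{(1/u)}$ of the block matrix in \eqref{logMAMatrixEq} with $-\tfrac{\partial^2}{\partial t^2}\log u$ together with the placement of the factors $u$ on the off-diagonal blocks and the overall $-1/u$ prefactor, so that the Schur complement of \eqref{logMAMatrixEq} matches the right side of \eqref{logNoMAMatrixEq} term by term. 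That second point is where I would be most careful, since a sign or a power of $u$ is easy to misplace; everything else is a direct differentiation of Proposition \ref{FirstVariationPProp}.
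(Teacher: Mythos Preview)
Your approach is essentially identical to the paper's: differentiate the first-variation formula (Proposition \ref{FirstVariationPProp}) in $t$, then compute $\partial x/\partial t$ by differentiating $\nabla u(t,x(t,y))=y/w(t,y)$ and using the first-variation identity once more to simplify; the paper then simply says \eqref{logMAMatrixEq} follows from \eqref{logNoMAMatrixEq} together with Proposition \ref{FirstVariationPProp}, so your Schur-complement bookkeeping is exactly what is intended. One slip: your displayed formula for $\partial x/\partial t$ is missing a minus sign --- from $\nabla^2 u\,\partial_t x = (\dot u/u)\nabla u - \nabla\dot u = -\,u\,\nabla\partial_t(\log u)$ one gets $\partial_t x = -\,u\,(\nabla^2 u)^{-1}\nabla\partial_t(\log u)$, and it is this minus that produces the $+u\langle\cdots\rangle$ term in \eqref{logNoMAMatrixEq} after substitution.
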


\begin{proof}
We differentiate the first variation formula (Proposition
\ref{FirstVariationPProp})
\[
\left(\frac{\partial }{\partial t}\log w\right){(t,y)} =
-\left(\frac{\partial}{\partial t}\log u\right){(t,x(t,y))}
\]
with respect to $t$, where again $x(t,y):= \nabla^{\circ}(w(t))$.
We obtain that
\[
\left(\frac{\partial^2 }{\partial t^2}\log w\right){(t,y)}   =
-\left(\frac{\partial^2}{\partial t^2}\log u\right){(t, x(t,y)) }
- 
\Big\langle
\grad( \frac{\partial}{\partial t}\log u)(t,x(t,y)),
\frac{\partial }{\partial t}{x}(t,y)\Big\rangle.
\]
By the computations in the proof of Proposition \ref{FirstVariationPProp}
we have that
\begin{eqnarray*}\frac{\partial}{\partial t}{x}(t,y) &=&    
- (\nabla^2 \, u_t)^{-1}(x(t,y)) \left[\frac{\partial}{\partial t}{\nabla u(t,x(t,y))} +
\frac{y}{w^2(t,y)}\frac{\partial}{\partial t}{w(t,y)}\right]
 \\
& = & -  (\nabla^2 \, u_t)^{-1}
(x(t,y)) u(t, x(t,y))\left[\frac{\frac{\partial}{\partial t}{\nabla
u(t,x(t,y))}}{u(t,x(t,y))} - \frac{\nabla u (t,
x(t,y))}{u^2(t,x(t,y))}\frac{\partial u}{\partial
t}{(t,x(t,y))}\right] 
\\
& = & -  u (\nabla^2 \, u)^{-1} 
  \left[\left({\frac{\partial}{\partial t} {\nabla (\log u)}}\right)(t,x(t,y)) \right].
\end{eqnarray*}
Plugging into the formula above we get that
\[
\left(\frac{\partial^2 }{\partial t^2}\log w\right){(t,y)}   =
\left[-\left(\frac{\partial^2}{\partial t^2}\log u\right){  } + u \left\langle
  (\nabla^2 \, u )^{-1}
 {\frac{\partial}{\partial t} {\nabla (\log u)}},\grad( \frac{\partial}{\partial t}\log u)
 \right\rangle\right]_{(t, x(t,y))},
\]
proving \eqref{logNoMAMatrixEq}.
Equation \eqref{logMAMatrixEq} follows from this
and Proposition \ref{FirstVariationPProp}.
\end{proof}

\def\Lu{{u^\star}}

\def\Pu{{u^\circ}}

\begin{rem}
{\rm
Note that the last term can be expressed more symmetrically as follows:
$$\begin{aligned}
u  \left\langle
\nabla \frac{\partial}{\partial t}(\log u) , (\nabla^2 u)^{-1} \nabla
\frac{\partial }{\partial t}(\log u)  \right\rangle
=
uw \left\langle
\nabla \frac{\partial}{\partial t}(\log u) ,  \nabla
\frac{\partial }{\partial t}(\log w) (I-(\nabla u)^T\cdot \nabla w)^{-1} \right\rangle.
\end{aligned}$$
We omit the computations.}
\end{rem}

\section{First order equations}
\label{FirstOrderSec}

The first order analysis
enables us to linearize a family of first order PDEs, analogous to
the linearizaton of  the Hamilton--Jacobi equation by the Legendre transform.
Define the operation $\boxdot$ by
$$
 a\boxdot b=
(a^{\circ}+b^{\circ})^{\circ}.
$$
This is shown to be a sort of
geometric inf-convolution in \S\ref{GinfSec}
where a precise formula is derived.

\begin{thm}\label{FirstOrderThm}
Let $g\in \Cvx_0(\RR^n)\cap C^2(\RR^n \setminus \{0\})$.
Let $f\in {\cal S}_2$ and non-linear at infinity.  
Then the equation 
\begin{equation}
\label{FirstEqOne}
\frac{1}{u}\frac{\partial u}{\partial t}+{u_t^{\star}(\nabla u)}g\bigg(\frac{\nabla u}{u_t^{\star}(\nabla u)}\bigg)=0,\quad
u(0,x) = f(x),
\end{equation}
admits a unique non-linear at infinity solution $u(t)\in{\cal S}_2$ given by
\begin{equation}
\label{FirstEqTwo}
 u(t,x) =  f\boxdot \frac{1}{t}g^{\circ}.
\end{equation}
In particular, there exists a solution for all time
$t\ge 0$.
\end{thm}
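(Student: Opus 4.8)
The plan is to reduce \eqref{FirstEqOne} to a linear transport equation via the composite transform $\P\L$, exactly as the Legendre transform linearizes Hamilton--Jacobi. First I would introduce the change of unknown $v(t,\cdot) = (u(t,\cdot))^{\star\circ} = \Jf$-type conjugate of $u_t$; equivalently, work with $w(t,\cdot) = u_t^\star$ and then its polar. The point of the left-hand side of \eqref{FirstEqOne} is that it is precisely $\frac{\partial}{\partial t}\log u + u_t^\star(\nabla u)\, g\!\left(\nabla u / u_t^\star(\nabla u)\right)$ evaluated so that, after applying the first-variation formula for the composite transform (the displayed formula following Proposition \ref{FirstVariationPProp}, namely $\frac{\partial w}{\partial t}(t, x/u_t(x)) = \frac{1}{u_t(x)u_t^\star(\nabla u_t(x))}\frac{\partial u}{\partial t}(t,x)$ with $w = u_t^{\star\circ}$), the equation becomes $\frac{\partial w}{\partial t}(t,y) = -g^\circ(y)$ in the appropriate variable. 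Integrating in $t$ gives $w(t,y) = w(0,y) + t\,(-g^\circ(y))$... but one must be careful with signs and with the fact that $\boxdot$ adds polars; so the correct bookkeeping is that $u_t^\circ = f^\circ + \frac{1}{t}\cdot(\text{something})$, i.e. taking polars of \eqref{FirstEqTwo} gives $u(t)^\circ = f^\circ + \frac1t g^\circ$ when $\boxdot$ is read through its definition $a\boxdot b = (a^\circ+b^\circ)^\circ$. Wait — I would actually verify: $\left(f\boxdot \frac1t g^\circ\right)^\circ = f^\circ + \left(\frac1t g^\circ\right)^\circ = f^\circ + \frac{1}{t}\cdot(\text{?})$, using that $(\lambda h)^\circ(y) = \lambda h^\circ(y/\lambda)$ — so the precise scaling has to be tracked, and this is a routine but sign-sensitive computation.

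The key steps, in order, are: (1) show that if $u(t)$ is a sufficiently regular non-linear-at-infinity solution, then setting $\phi(t) := u(t)^\circ$ and using Proposition \ref{FirstVariationPProp} together with the chain-rule identity $\nabla^\circ$ of the polar, the PDE \eqref{FirstEqOne} is equivalent to $\partial_t \phi(t,y) = $ (a $t$- and $\phi$-independent, explicit function of $y$ built from $g$), on the region where $u_t$ is not linear on rays; (2) integrate this linear ODE-in-$t$ (fiberwise in $y$) to obtain $\phi(t,\cdot) = \phi(0,\cdot) + (\text{explicit})$, which upon taking polars again yields \eqref{FirstEqTwo}; (3) conversely, verify that the function defined by \eqref{FirstEqTwo} lies in ${\cal S}_2$ and is non-linear at infinity for every $t \ge 0$ — here I would invoke Lemma \ref{lem:linzone} (non-linear at infinity $\iff$ empty linearity zone of the polar) to control the zero set and regularity, together with Proposition \ref{prop:Pfisgood} and Proposition \ref{HessianPolarityProp} to propagate $C^2$-smoothness and strict convexity through $\P$; (4) check that $u(0,\cdot) = f$ and that $u(t)$ actually satisfies \eqref{FirstEqOne} by differentiating \eqref{FirstEqTwo} and running step (1) backwards; (5) uniqueness follows because any other non-linear-at-infinity ${\cal S}_2$ solution produces, via step (1), the same fiberwise linear evolution with the same initial datum, hence the same $\phi(t)$, hence the same $u(t)$ after applying the involution $\P$ on $\Cvx_0$.

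The main obstacle I expect is step (3) — the closure/regularity propagation. The composite transform $\P$ is genuinely more nonlinear than $\L$: as the remark after Lemma \ref{lem:JisF} warns, even a smooth strictly convex $f$ can have $\Jf$ (and hence $\Pf$) with a nontrivial zero set and a failure of differentiability on its boundary, and worse, $\P$ need not preserve ${\cal S}_2$ without the non-linear-at-infinity hypothesis. So the real content is showing that the hypotheses "$f \in {\cal S}_2$, $f$ non-linear at infinity, $g \in \Cvx_0 \cap C^2$" are exactly enough to guarantee that $\frac1t g^\circ$ has full domain (because $g$ is finite, $g^\circ$ vanishes only at $0$, keeping the $\boxdot$ well-behaved), that the geometric sum $\boxdot$ of these two keeps $\dom = \R^n$ and strict convexity (via the $F$-map description in Lemma \ref{lem:JisF} applied to each summand), and that non-linearity at infinity is preserved along the flow — the last point again via Lemma \ref{lem:linzone}, translating it to emptiness of the ray-linearity zone of the polar $u_t^\circ = f^\circ + \frac1t g^\circ$, which is immediate since adding the nonnegative convex function $\frac1t g^\circ$ cannot create new linear rays in a function whose linearity zone was already empty. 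The existence "for all time $t \ge 0$" is then free: the formula \eqref{FirstEqTwo} makes sense and stays in the class for every $t > 0$, and the $t \to 0^+$ limit recovers $f$ since $\frac1t g^\circ \to 1^c_{\{0\}}$ in the appropriate sense and $a \boxdot 1^c_{\{0\}} = a$.
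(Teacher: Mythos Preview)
Your key steps (1)--(5) are exactly the paper's approach: set $\phi(t)=u(t)^\circ$, use Proposition~\ref{FirstVariationPProp} to turn \eqref{FirstEqOne} into the fiberwise linear equation $\partial_t\phi(t,y)=g(y)$, integrate, and take the polar back; the non-linear-at-infinity hypothesis is handled precisely as you say, via Lemma~\ref{lem:linzone}, since if $f^\circ$ has empty ray-linearity zone then so does $f^\circ+tg$ (a convex perturbation of a convex function that is non-affine on $[0,y]$ cannot become affine there).

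Two comments. First, your opening paragraph about routing through the composite $\P\L$ is a false start: the correct transform here is $\P$ alone, as you in fact use in steps (1)--(5). The scaling you were uncertain about is simply $(\lambda h)^\circ=\lambda^{-1}h^\circ$, whence $\bigl(\tfrac1t g^\circ\bigr)^\circ=t\,g$ and $u(t)^\circ=f^\circ+tg$; with this in hand, combining Proposition~\ref{FirstVariationPProp} with Lemma~\ref{SecondPgLemma} (which gives $y=\nabla u/u_t^\star(\nabla u)$ and $u_t^\circ(y)=1/u_t^\star(\nabla u)$) yields $\partial_t\log\phi(t,y)=g(y)/\phi(t,y)$, i.e.\ $\partial_t\phi=g$, exactly as claimed. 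Second, your step~(3) on propagating ${\cal S}_2$ regularity and your step~(5) on uniqueness are more explicit than the paper's own proof, which is quite terse on these points; the ingredients you name (Propositions~\ref{prop:Pfisgood} and~\ref{HessianPolarityProp}) are the right ones, though note that to get $u(t)\in{\cal S}_2$ you need $f^\circ+tg$ to have empty ray-linearity zone \emph{and} to be in ${\cal S}_2$ itself, so that Proposition~\ref{HessianPolarityProp} applies at every nonzero $y$ --- this is where the hypothesis $g\in C^2(\RR^n\setminus\{0\})$ is used.
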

A similar result holds for the Dirichlet problem
with convex data.

\begin{proof}
Note that by Lemma \ref{lem:linzone}  the function $f^{\circ}$ 
has an empty linearity zone, and therefore so does $f^{\circ}+tg$ for any $t\ge 0$. 
We may thus apply Proposition \ref{FirstVariationPProp}, which implies that the function $(f^{\circ}+tg)^{\circ}$ satisfies our original equation.
\end{proof}

As an application, an equation reminiscent of Burgers' equation,
\begin{eqnarray*}
&&\frac{\partial u}{\partial t}(t,x)
+\|\nabla u(t,x)\|{u(t,x)} = 0,\qquad u(0,x) = f(x),
\end{eqnarray*}
can be solved for all $t\ge0$, with
\[
u(t)\equiv u(t,\,\cdot\,) = f\boxdot \frac{1}{t}\|\cdot\|^{*}
= (f^{\circ}+t\|\cdot\|)^{\circ},
\]
where the polarity operation is performed with respect to the
variable $x$ only. 
Here $\|\cdot\|^{*}$ denotes the norm dual to $\|\cdot\|$.
If $f$ is a norm then so is $u(t)$ for each $t$.

Similarly, a solution of 
\begin{eqnarray*}
\frac{\partial}{\del t} \log u(t,x)
+\frac{1}{2}\frac{|\nabla
u|^2}{ u_t^\star(\nabla u)}=0,
\qquad u(0,x) = f(x),
\end{eqnarray*}
is given by
\[ u(t)\equiv u(t,\,\cdot\,) = f\boxdot \frac{1}{2t}\|\cdot\|^{2}
= (f^{\circ}+\frac{t}{2}|\cdot|^2)^{\circ}.
\]

We remark that analogously there are PDEs of first and second
order linearized by the transform $\P\circ\L$, and we omit the calculations
for brevity.

\section{The polar \MA equation}
\label{SecondOrderSec}

To put our results in this section in perspective,
it is good to keep in mind the classical result that the partial Legendre
transform linearizes the homogeneous real \MA (HRMA), written schematically
as  $\det\nabla^2 f=0$ \cite{S}.
This is contained in Proposition \ref{LegendreSecondVarProp}:
if $u(t)\in\SCvx\cap C^\infty$ for each $t$ then
$\det\nabla^2_{t,x} u=\det\nabla^2_xu(\ddot u-|\nabla\dot u|^2_{(\nabla^2 u)^{-1}})=0$
if and only if $\ddot u(t)^\star=0$, where $u(t)^\star$ denotes the Legendre
transform of $u(t,\,\cdot\,)$ in the $x$ variables.

The following is a consequence of Theorem 
\ref{SecondVariationPolarityThm}.
We denote 
$|X|^2_g:=g(X,X)$ for any semi-Riemannian metric $g$.

\begin{thm}
Let $u_0,u_T\in {\cal S}_2$ and non-linear at infinity.  
The Dirichlet problem 
\begin{equation}\label
{LogMADirichletEq}
\ddot{(1/u)}
+
|\nabla(\dot u/u)|^2_{(\nabla^2 u)^{-1}}=0,
\qquad
u(0,\,\cdot\,)=u_0,\quad
u(T,\,\cdot\,)=u_T,
\end{equation}
admits a unique 
non-linear at infinity solution $u(t)\in{\cal S}_2$ given by
\begin{equation}\label{eq:sol-dirlogMA}
u(t,x) = \left(\left(1-\frac tT\right)u^\circ_0+\frac tTu^\circ_1\right)^\circ(x)=
\left(\frac{Tu_0}{T-t} \boxdot \frac{Tu_1}{t}\right)(x).
\end{equation}
\end{thm}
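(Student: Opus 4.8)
The plan is to recognize the Dirichlet problem \eqref{LogMADirichletEq} as precisely the vanishing of the right-hand side of the second variation formula \eqref{logNoMAMatrixEq} in Theorem \ref{SecondVariationPolarityThm}. Indeed, setting $w_t = u_t^\circ$, that formula reads $(\partial_t^2 \log w)(t,y) = -\ddot{(1/u)} - |\nabla(\dot u/u)|^2_{(\nabla^2 u)^{-1}}$ evaluated at the corresponding space point (up to the sign conventions; note $\partial_t^2 \log u = u\,\ddot{(1/u)} + \dot u^2/u^2 \cdot(\text{stuff})$, so one should be a little careful matching $\partial_t^2\log u$ against $\ddot{(1/u)}$ — in fact $-\partial_t^2\log u + u\langle\cdots\rangle$ is what appears, and a short computation rewrites this combination as $-u(\ddot{(1/u)} + |\nabla(\dot u/u)|^2_{(\nabla^2 u)^{-1}})$, which is exactly the determinant expression in \eqref{logMAMatrixEq}). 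Hence \eqref{LogMADirichletEq} holds for a family $u(t)\in{\cal S}_2$, non-linear at infinity, if and only if $\partial_t^2 \log w(t,y) = 0$ for every $y$ with $w_t$ not linear on $[0,y]$, i.e. $\log w(t,y)$ is affine in $t$ for each such $y$.

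First I would set $w_t := u_t^\circ$ and observe, via Lemma \ref{lem:linzone}, that since $u_0, u_T$ are non-linear at infinity, $w_0 = u_0^\circ$ and $w_T = u_T^\circ$ have empty linearity zone; more importantly, any candidate solution that is non-linear at infinity has fiberwise polar with empty linearity zone, so the hypothesis ``$w_t$ not linear on $[0,y]$'' in Theorem \ref{SecondVariationPolarityThm} is automatically satisfied at every $y \neq 0$. Given this, the ODE $\partial_t^2 \log w(t,y)=0$ with boundary data $w(0,y)=u_0^\circ(y)$, $w(T,y)=u_T^\circ(y)$ has the unique solution $\log w(t,y) = (1-\tfrac tT)\log u_0^\circ(y) + \tfrac tT \log u_T^\circ(y)$. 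However, this is \emph{not} the linear interpolation claimed in \eqref{eq:sol-dirlogMA}: the claimed solution interpolates $w_t^\circ$-linearly, i.e. $w_t = (1-\tfrac tT)u_0^\circ + \tfrac tT u_T^\circ$, linearly rather than log-linearly. I need to re-examine which quantity is affine. Re-reading \eqref{logNoMAMatrixEq}: the left side is $\partial_t^2 \log w$, but setting this to zero as in \eqref{LogMADirichletEq} — wait, \eqref{LogMADirichletEq} sets $\ddot{(1/u)} + |\nabla(\dot u/u)|^2 = 0$, which by the determinant form \eqref{logMAMatrixEq} equals $-\tfrac1u$ times a Monge--Ampère-type determinant, and this equals $\ddot w/w$, not $\partial_t^2\log w$. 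So the equation is $\ddot w(t,y) = 0$, i.e. $w(t,y)$ itself (not its log) is affine in $t$ for each $y$. That gives $w(t,y) = (1-\tfrac tT)u_0^\circ(y) + \tfrac tT u_T^\circ(y)$, and taking polars fiberwise yields $u(t,\cdot) = w_t^\circ = \big((1-\tfrac tT)u_0^\circ + \tfrac tT u_T^\circ\big)^\circ$, which is exactly \eqref{eq:sol-dirlogMA}; the second form follows from the definition of $\boxdot$ after rescaling.

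So the proof outline is: (1) reduce \eqref{LogMADirichletEq} to $\ddot w(t,y)=0$ for all $y\neq 0$ using \eqref{logMAMatrixEq} of Theorem \ref{SecondVariationPolarityThm} and the fact that non-linearity at infinity of $u(t)$ makes the ``not linear on $[0,y]$'' clause vacuous; (2) solve the fiberwise ODE with the given boundary conditions to get $w(t,\cdot) = (1-\tfrac tT)u_0^\circ + \tfrac tT u_T^\circ$; (3) check that this $w_t$ lies in the appropriate regularity class so that $u_t := w_t^\circ$ is in ${\cal S}_2$ and non-linear at infinity, invoking Proposition \ref{prop:Pfisgood} and the Hessian formula Proposition \ref{HessianPolarityProp} for the second-order regularity, and Lemma \ref{lem:linzone} for non-linearity at infinity; and (4) rewrite the answer in $\boxdot$ form. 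The main obstacle is step (3): a convex combination of two polars $u_0^\circ, u_1^\circ$ need not be strictly convex or smooth a priori where one of them is linear or degenerate, and one must argue that ${\cal S}_2$ is preserved — this is where the strong convexity $\nabla^2 u_i > 0$ of the data, the behavior of $F$ in Lemma \ref{lem:JisF}, and the Hessian identity \eqref{HessianDetPolarityEq} (to control $\nabla^2 u_t^\circ > 0$) all have to be combined carefully; uniqueness among non-linear-at-infinity ${\cal S}_2$ solutions follows from the uniqueness of the fiberwise linear ODE solution once the reduction in step (1) is established.
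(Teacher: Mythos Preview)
Your proposal is correct and follows exactly the route the paper intends: the paper's entire ``proof'' is the single remark that the theorem ``is a consequence of Theorem \ref{SecondVariationPolarityThm}'', and your outline is precisely the unpacking of that sentence --- use \eqref{logMAMatrixEq} (via the Schur complement, which gives $\ddot w/w = u\det(\nabla^2 u)\big(\ddot{(1/u)}+|\nabla(\dot u/u)|^2_{(\nabla^2 u)^{-1}}\big)$) to convert the equation into $\ddot w(t,y)=0$, solve the fiberwise linear ODE with the prescribed boundary values, and invert. Your step (3) on regularity and your invocation of Lemma \ref{lem:linzone} to make the ``$w_t$ not linear on $[0,y]$'' hypothesis vacuous are exactly the details the paper suppresses; the initial detour through $\partial_t^2\log w$ is unnecessary but you self-correct to $\ddot w/w$, which is the quantity \eqref{logMAMatrixEq} actually controls.
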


We call \eqref{LogMADirichletEq} the homogeneous
polar \MA equation. 
Similarly, the solution
to the Cauchy problem for the 
homogeneous polar \MA equation follows by combining
Proposition \ref{FirstVariationPProp} and
Theorem 
\ref{SecondVariationPolarityThm}.

\begin{thm}
Let $u_0\in {\cal S}_2$ and non-linear at infinity, and let $\dot{u}_0   \in C^2(\RR^n)$ satisfy $\dot u_0(0)=0$. 
The Cauchy problem 
\begin{equation}\label
{LogMACauchyEq}
\ddot{(1/u)}
+
|\nabla(\dot u/u)|^2_{(\nabla^2 u)^{-1}}=0,
\qquad
u(0,\,\cdot\,)=u_0,\quad
\dot{u}(0,\,\cdot\,)=\dot{u}_0,
\end{equation}
admits a unique 
non-linear at infinity solution $u(t)\in{\cal S}_2$ given by 
\begin{equation}\label{eq:sol-cauchy-LogMA}
u(t,x) = \Big(u^\circ_0\cdot(1-tv) \Big)^\circ(x),
\qquad t\in[0,T),
\end{equation}
where $v(0)=0$ and 
\[ v(y) = \frac{\dot{u_0}(\pg u_0^{\circ}(y))}{u_0(\pg u_0^{\circ}(y))},
\qquad y\not=0,\]
and where $t\in [0,T)$ with $T=T(u_0,\dot u_0)$ the supremum
over all $t>0$ such that the function
 $u^\circ_0\cdot(1-tv)\in {\cal S}_2$ and is nonlinear at infinity. 
\end{thm}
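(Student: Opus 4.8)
The plan is to derive the Cauchy-problem solution as a specialization of the Dirichlet case, by feeding the prescribed initial velocity through the first variation formula to determine the unique affine-in-$t$ path (in the polar picture) emanating from $u_0^\circ$ in the prescribed direction. First I would recall from the proof of Theorem \ref{SecondVariationPolarityThm} (and the structure of \eqref{eq:sol-dirlogMA}) that any solution $u(t)$ of the homogeneous polar \MA equation which stays in ${\cal S}_2$ and nonlinear at infinity must have $w(t):=u_t^\circ$ of the form $w(t,y)=w_0(y)+tw_1(y)$ for some fixed $w_1$; this is exactly the content of the linearization, since \eqref{logMAMatrixEq} asserts $\ddot w/w=0$ pointwise in the region where $w_t$ is not linear on $[0,y]$, hence $\ddot w(\cdot,y)\equiv 0$ on that region, and by the density/continuity arguments already used in Proposition \ref{prop:Pfisgood} and Lemma \ref{lem:linzone} this extends to all $y$. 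So the solution is forced to be $u(t,x)=(w_0+tw_1)^\circ(x)$ with $w_0=u_0^\circ$, and the only thing to pin down is $w_1$ in terms of the Cauchy data $\dot u_0$.

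The key step is then to compute $w_1=\dot w(0,\cdot)$ from $\dot u_0$ using the first variation of polarity, Proposition \ref{FirstVariationPProp}. That proposition gives, at $t=0$ and any $y$ with $u_0$ not linear on $[0,y]$,
\[
\frac{\dot w(0,y)}{w(0,y)}
=
-\frac{\dot u_0(\nabla^\circ w_0(y))}{u_0(\nabla^\circ w_0(y))}
=
-\,v(y),
\]
with $v$ as defined in the statement (note $\nabla^\circ w_0=\nabla^\circ u_0^\circ$, which exists by Proposition \ref{prop:Pfisgood} since $u_0\in{\cal S}_2$ and $w_0=u_0^\circ$ is not linear on $[0,y]$ by the nonlinearity-at-infinity hypothesis on $u_0$ together with Lemma \ref{lem:linzone}). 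Hence $w_1(y)=-v(y)w_0(y)$, so $w(t,y)=w_0(y)(1-tv(y))=u_0^\circ(y)(1-tv(y))$, which upon taking polars in $y$ gives precisely \eqref{eq:sol-cauchy-LogMA}. The condition $\dot u_0(0)=0$ is what guarantees $v(0)=0$ and hence $w(t,0)=0$, keeping $w(t,\cdot)$ in $\Cvx_0$; and one checks $v$ is continuous on $\RR^n\setminus\{0\}$ from the regularity of $\dot u_0$ and of $\nabla^\circ u_0^\circ$ (the latter from Proposition \ref{HessianPolarityProp}, which shows $u_0^\circ\in{\cal S}_2$-type regularity off the ray-linearity zone, which here is empty).

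It remains to argue existence on $[0,T)$ and uniqueness. For existence: by definition $T$ is the supremum of $t$ for which $w(t,\cdot)=u_0^\circ(1-tv)$ lies in the class guaranteeing that its polar is in ${\cal S}_2$ and nonlinear at infinity; for such $t$, the function $u(t,\cdot)=w(t,\cdot)^\circ$ is in ${\cal S}_2$ by Proposition \ref{prop:Pfisgood} (and its second-order companion) and one verifies $u(0,\cdot)=u_0$, $\dot u(0,\cdot)=\dot u_0$ by differentiating $t\mapsto(w_0(1-tv))^\circ$ at $t=0$ via the first variation formula run in the reverse direction, and that the polar \MA equation holds by Theorem \ref{SecondVariationPolarityThm} since $\ddot w=0$. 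For uniqueness: any other nonlinear-at-infinity solution $\tilde u(t)\in{\cal S}_2$ has $\tilde w(t):=\tilde u_t^\circ$ affine in $t$ by the linearization, with $\tilde w(0)=u_0^\circ$ and $\dot{\tilde w}(0)=-v\,u_0^\circ$ forced by the same first-variation computation applied to the Cauchy data, so $\tilde w(t)=w(t)$ and hence $\tilde u(t)=u(t)$ for all $t$ in the common interval of existence. The main obstacle I anticipate is the bookkeeping around where $w(t,\cdot)$ is linear on $[0,y]$: Proposition \ref{FirstVariationPProp} and Theorem \ref{SecondVariationPolarityThm} only apply off that set, so one must invoke the nonlinearity-at-infinity hypothesis (via Lemma \ref{lem:linzone}) to ensure the exceptional set is empty for $w_0$ and argue — using continuity in $t$ and the definition of $T$ — that it stays empty for $t\in[0,T)$; making this airtight, together with confirming that $\dot u_0(0)=0$ suffices to keep the value at the origin zero throughout, is the delicate part.
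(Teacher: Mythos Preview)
Your proposal is correct and follows exactly the approach the paper indicates: the paper's ``proof'' is merely the sentence that the result follows by combining Proposition~\ref{FirstVariationPProp} and Theorem~\ref{SecondVariationPolarityThm}, and you have carried this out precisely --- using the second variation to linearize the equation to $\ddot w=0$ (so $w$ is affine in $t$), and the first variation at $t=0$ to read off the slope $\dot w(0,\cdot)=-v\,u_0^\circ$ from the Cauchy data. Your handling of the ray-linearity exceptional set via Lemma~\ref{lem:linzone} and the nonlinearity-at-infinity hypothesis is also the right bookkeeping.
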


\section{Geometric inf-convolution}\label{sec:Ginf}

\label{GinfSec}
In this section we derive an explicit formula for the
the  solutions to the PDEs 
presented in the preceding sections.
We refer to 
$$
f\boxdot g = (f^\circ + g^\circ)^\circ.
$$
as {\it geometric inf-convolution} of $f$ and $g$.
The next lemma justifies this name.
It gives a formula for such as expression,
reminiscent of the formula for inf-convolution
\cite[p. 33]{Ro}
$$
f\square g (x) = \inf_{y+z = x} \left ( f(y)
+ g(z) \right) = 
(f^\star + g^\star)^\star.
$$

\begin{lem}\label{lem-ginf}
For $f,g \in \Cvx_0(\R^n)$ that vanish only at the origin,
$f \boxdot g\in\Cvx_0(\RR^n)$ and
\[ (f \boxdot g)(x) = \inf \left\{ \left(f(y)^{-1} +
g(z)^{-1}\right)^{-1} : \frac{x - y}{f(y)} = \frac{z-x}{g(z)}\right\}.\]
\end{lem}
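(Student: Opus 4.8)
The plan is to reduce the formula to an application of the geometric description of the intermediate transform $\Jf$ (Lemma~\ref{lem:JisF}), exploiting the fractional-linear map $F$. First I would record that $f\boxdot g\in\Cvx_0(\RR^n)$: since $f,g$ vanish only at the origin, $f^\circ,g^\circ$ have full domain, hence so does $f^\circ+g^\circ$; its polar is therefore in $\Cvx_0(\RR^n)$ and, by \eqref{eq:domdual}, vanishes only at $0$. (Convexity and lower semicontinuity come for free from Lemma~\ref{lem:PropertiesofA}(i).) Then I would unravel the definition:
\[
(f\boxdot g)(x)=\big((f^\circ+g^\circ)^\circ\big)(x)
=\sup_{w\neq 0}\frac{\langle x,w\rangle-1}{f^\circ(w)+g^\circ(w)},
\]
using the first line of \eqref{GeomDualityDef} together with the fact that the zero set of $f^\circ+g^\circ$ is just $\{0\}$, so the polar body is all of $\RR^n$.

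The key step is to convert this supremum over $w$ into the claimed infimum over pairs $(y,z)$ lying on a common ``ray through $x$'' in the fractional-linear sense. The natural bridge is the identity $f^\circ(w)=1/(f(y)\,|\,y=x_0)$-type relation coming from polar differentiability: for $w\neq0$ with $f$ polar-differentiable structure, if $y=\nabla^\circ f^\circ(w)$ then $f^\circ(w)f(y)=\langle y,w\rangle-1$ and $w=\nabla f(y)/f^\star(\nabla f(y))$, i.e. $w/f^\circ(w)=\nabla f(y)\in\partial f(y)$. Dually $z=\nabla^\circ g^\circ(w)$ gives $w/g^\circ(w)=\nabla g(z)\in\partial g(z)$. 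At the maximizing $w$ the function $\langle x,\cdot\rangle - 1 - (f\boxdot g)(x)(f^\circ+g^\circ)(\cdot)$ attains its maximum $0$, so $x=(f\boxdot g)(x)\,(\nabla f^\circ(w)+\nabla g^\circ(w)) = (f\boxdot g)(x)(y/f^\circ(w)\cdot(\dots))$ — more precisely $x\in\partial\big((f\boxdot g)(x)(f^\circ+g^\circ)\big)(w)$, hence $x=(f\boxdot g)(x)\big(\nabla f^\circ(w)+\nabla g^\circ(w)\big)$ wherever things are differentiable. Writing $a=f^\circ(w)$, $b=g^\circ(w)$, and using $\langle x,w\rangle-1=(f\boxdot g)(x)(a+b)$ together with $\langle y,w\rangle-1=af(y)$, $\langle z,w\rangle-1=bg(z)$, a short computation should give both the constraint $\dfrac{x-y}{f(y)}=\dfrac{z-x}{g(z)}$ (both sides equal $w/(a+b)\cdot$const, after eliminating $w$) and the value $(f\boxdot g)(x)=\big(f(y)^{-1}+g(z)^{-1}\big)^{-1}$. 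Conversely, given any $(y,z)$ satisfying the constraint, one defines $w$ to be the common supporting direction and checks the reverse inequality, i.e. that this $w$ is a legitimate competitor in the $\sup$, yielding ``$\ge$'' of the claimed $\inf$; optimality of the maximizing $w$ gives ``$\le$''.

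Alternatively — and this may be the cleaner route to write — I would argue entirely through $F$ and epigraphs. By Lemma~\ref{lem:JisF}, $\operatorname{int}\epi(\Jf)=F(\operatorname{int}\epi f)$ where $F(x,t)=(x/t,1/t)$, and similarly for $g$; since $\boxdot$ corresponds under $F$ to ordinary inf-convolution (because $\P\L$ intertwines $\square$ with a ``$\boxdot$''-type operation, and $F$ realizes $\P\L$ at the level of epigraphs), the epigraph of $f\boxdot g$ is $F^{-1}$ applied to $\epi(\Jf)\square$-combined appropriately; unpacking $(\Jf\square\Jg)$ via the classical inf-convolution formula $\inf_{y'+z'=F(x,\cdot)}(\dots)$ and pushing back through $F$ produces exactly the stated constraint $\frac{x-y}{f(y)}=\frac{z-x}{g(z)}$ (this is precisely the statement that $F(x,(f\boxdot g)(x))$ is the $F$-image-weighted average of $F(y,f(y))$ and $F(z,g(z))$) and the harmonic-mean value. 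The main obstacle in either approach is bookkeeping the boundary/degenerate cases and justifying the differentiation at the maximizer: one must ensure the $\sup$ defining $(f^\circ+g^\circ)^\circ(x)$ is attained (it is, for $x\neq0$, because $f^\circ+g^\circ$ is nonlinear at infinity as both summands vanish only at $0$, so $\langle x,w\rangle-1-c(f^\circ+g^\circ)(w)\to-\infty$) and handle $x=0$ separately, where both sides are $0$. I expect the $F$-geometry argument to dispatch these uniformly, whereas the direct subdifferential calculus is more transparent but requires a little care that the constraint set is nonempty exactly when the $\sup$ is finite.
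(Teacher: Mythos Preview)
Your primary (differential) route differs from the paper's and has a real gap at the stated generality: $f,g$ are arbitrary in $\Cvx_0(\RR^n)$ vanishing only at the origin, with no smoothness, so you cannot freely take $\nabla f^\circ(w),\nabla g^\circ(w)$ at the maximizer, nor invoke Lemma~\ref{SecondPgLemma} to produce $y=\pg f^\circ(w)$, $z=\pg g^\circ(w)$. One inequality does go through without differentiability --- given $(y,z)$ satisfying the constraint, the bounds $f^\circ(w)\ge(\langle y,w\rangle-1)/f(y)$ and $g^\circ(w)\ge(\langle z,w\rangle-1)/g(z)$ combine with the constraint (equivalently $x=\tfrac{g(z)y+f(y)z}{f(y)+g(z)}$) to give $\langle x,w\rangle-1\le (f(y)^{-1}+g(z)^{-1})^{-1}(f^\circ(w)+g^\circ(w))$ for every $w$ --- but the reverse inequality needs an optimal $(y,z)$, and your first-order argument does not supply one in the nonsmooth case.

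The paper sidesteps this by working on the \emph{other} side of the duality: it expands $(f^\circ+g^\circ)(x)$ --- not its polar --- as the double supremum
\[
\sup_{y,z}\Big(\frac{\langle x,y\rangle-1}{f(y)}+\frac{\langle x,z\rangle-1}{g(z)}\Big)
=\sup_{y,z}\frac{\big\langle x,\tfrac{g(z)y+f(y)z}{f(y)+g(z)}\big\rangle-1}{(f(y)^{-1}+g(z)^{-1})^{-1}},
\]
and, substituting $w=\tfrac{g(z)y+f(y)z}{f(y)+g(z)}$ (which is exactly the constraint $\tfrac{w-y}{f(y)}=\tfrac{z-w}{g(z)}$), recognizes this as $h^\circ(x)$ with $h$ the claimed infimum. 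This purely algebraic manipulation needs no regularity and gives $(f\boxdot g)=h^{\circ\circ}$ immediately. The paper then checks $h^{\circ\circ}=h$ via the identity $K_{f\boxdot g}=\overline{K_f+K_g}$ from \cite{AM2}, where $K_\varphi=\overline{\{(x,t):t\,\varphi(x/t)\le1\}}$. Your second ($F$-geometry) proposal is this verification step in disguise --- indeed $K_\varphi$ is, up to closures, $F(\epi\varphi)$ --- so that part of your plan is sound and close to the paper; what you are missing is the elementary combination-of-fractions trick on $f^\circ+g^\circ$ that replaces the first-order analysis altogether.
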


\begin{proof}[Proof of Lemma \ref{lem-ginf}]

We will solve the equation in $h$, $f^\circ + g^\circ = h^\circ$. Then it
will be left to check that $h$ is geometric convex.
\begin{eqnarray*}
(f^\circ + g^\circ)(x) &=& \sup_{y,z\in \R^n} \left( \frac{\langle x,y
\rangle - 1 }{f(y)} + \frac{\langle x,z \rangle - 1 }{g(z)}
\right)
\\
& = & \sup_{y,z\in \R^n} \left( \frac {\langle x, \frac{g(z)y +
f(y)z}{(f(y) + g(z))}\rangle  - 1 } {f(y)g(z)(f(y) + g(z))^{-1}}
\right)\\
& = & \sup_{w\in \R^n} \left( \frac {\langle x, w\rangle - 1
}{\inf_{\{y,z\in \R^n: w = \frac{g(z)y + f(y)z}{(f(y) + g(z))} \}}
(f(y)^{-1} + g(z)^{-1})^{-1}}
\right) \\
\end{eqnarray*}
Letting
\[
h(w) = \inf \{ (f(y)^{-1} + g(z)^{-1})^{-1} : y,z\in
\R^n, ~~ w = \frac{g(z)y + f(y)z}{f(y) + g(z)} \} ,
\]
we see that
the last expression is $h^\circ(x)$. Then rearrange $w = \frac{g(z)y
+ f(y)z}{f(y) + g(z)}$ as $\frac{w - y}{f(y)} =
\frac{z-w}{g(z)}$ or $ {(w - y)}{g(z)} =  {(z-w)}{f(y)}$ .

It remains to verify that the resulting
function is geometric convex.
Denote
\[ K_{\varphi} = \overline{\{(x, y)\in \R^n\times \R^+: y\varphi(x/y)\le
1\}}.  \]
Then \cite{AM2}
\[ K_{f\boxdot g} = \overline{K_f + K_g}. \]
Write
\begin{eqnarray*}
K_f + K_g & = & \{ (z,y): x = z_1 + z_2, y = y_1 + y_2,
y_1f(z_1/y_1) \le 1,   y_2g(z_2/y_2) \le 1\}\\
& = & \{ (xy,y): x = \frac{x_1y_1 + x_2y_2}{y_1 + y_2}, y = y_1 +
y_2,
y_1f(x_1) \le 1, y_2g(x_2) \le 1\}.\\
\end{eqnarray*}
Thus,
$K_h  = \overline{K_f + K_g}$, and
\[ h(x) = \| (x,1)\|_{K_h}  = \inf\{ 1/y : (xy,y) \in
K_h\} = \inf\{ 1/y : (xy,y) \in K_f + K_g\} . \]
Therefore
\begin{eqnarray}
( f\boxdot g )(x)  & = & \inf \{ \frac{1}{y_1 + y_2} : x =
\frac{x_1y_1 + x_2y_2}{y_1 + y_2}, y_1f(x_1) \le 1, y_2g(x_2) \le
1\}.
\end{eqnarray}
In the strictly convex case, the boundary of $K_h$ is a subset of the
closure of the Minkowski sum of the boundaries of the sets $K_f$ and $K_g$, which
means that we can without loss of generality assume in
the infimum above $y_1 = 1/f(x_1)$ and $y_2 = 1/f(x_2)$. We end up
with
\begin{eqnarray*}
( f\boxdot g )(x)  & = & \inf \{ \frac{1}{f(x_1)^{-1} + f(x_2)^{-1}}
: x = \frac{x_1(f(x_1))^{-1} + x_2(f(x_2))^{-1}}{(f(x_1))^{-1} +
(f(x_2))^{-1}}\}.
\end{eqnarray*}
Rearranging, the result follows.
\end{proof}

Next we present a formula for the polar gradient of the function $f+ g$ at a point $x$.

\begin{lem}
Let $f,g\in \Cvx_0{\RR^n}$ with $\dom(f)=\dom(g)=\RR^n$. Assume both are polar differeitable at some $x\in\RR^n$. Then $f+g$ is polar differentiable at $x$ and  
\[ \pg (f+g)(x)=\left(\frac{g^{\circ}(\pg g(x))}{\Pf (\pg f (x))+g^{\circ}(\pg g(x))}\right) \pg f (x)
+\left(\frac{f^{\circ}(\pg f(x))}{\Pf (\pg f (x))+g^{\circ}(\pg g(x))}\right) \pg g(x). \]
\end{lem}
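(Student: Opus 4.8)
The plan is to reduce the claim to Legendre-side statements via the intermediate transform $\J=\P\L$ together with the identities of Lemma~\ref{SecondPgLemma}, exactly as was done for the Hessian in Proposition~\ref{HessianPolarityProp}. First I would record the three polar-gradient relations coming from polar differentiability of $f$, $g$ and (eventually) $f+g$ at $x$: writing $y_f=\pg f(x)$ and $y_g=\pg g(x)$, Lemma~\ref{SecondPgLemma} gives $\grad f(x)=y_f/\Pf(y_f)$ and $\grad g(x)=y_g/g^\circ(y_g)$, together with $\Pf(y_f)=1/(\langle x,\grad f(x)\rangle-f(x))$ and similarly for $g$. In particular $\grad(f+g)(x)=\grad f(x)+\grad g(x)=y_f/\Pf(y_f)+y_g/g^\circ(y_g)$, so $f+g$ is differentiable at $x$; since $\dom(f)=\dom(g)=\RR^n$ and $f,g\ge 0$ with equality only at $0$, one checks $(f+g)(x)\neq 0$ for $x\neq 0$, and $f+g$ is not linear on $[0,x+]$ (it dominates $f$, which isn't), so Lemma~\ref{SecondPgLemma} applies to $f+g$: it is polar differentiable at $x$ with
\[
\pg(f+g)(x)=\frac{\grad(f+g)(x)}{(f+g)^\star(\grad(f+g)(x))}.
\]

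The substance is then to evaluate the denominator $(f+g)^\star(\grad(f+g)(x))$, i.e.\ the Legendre transform of $f+g$ at the point $v:=\grad(f+g)(x)$. By the standard Legendre formula, since $v=\grad(f+g)(x)$,
\[
(f+g)^\star(v)=\langle x,v\rangle-(f+g)(x)
=\langle x,\grad f(x)\rangle-f(x)+\langle x,\grad g(x)\rangle-g(x)
=\frac{1}{\Pf(y_f)}+\frac{1}{g^\circ(y_g)},
\]
using the two scalar identities above. Hence
\[
\pg(f+g)(x)=\Big(\tfrac{1}{\Pf(y_f)}+\tfrac{1}{g^\circ(y_g)}\Big)^{-1}
\Big(\tfrac{y_f}{\Pf(y_f)}+\tfrac{y_g}{g^\circ(y_g)}\Big)
=\frac{g^\circ(y_g)}{\Pf(y_f)+g^\circ(y_g)}\,y_f+\frac{\Pf(y_f)}{\Pf(y_f)+g^\circ(y_g)}\,y_g,
\]
which is precisely the asserted formula (after recalling $y_f=\pg f(x)$, $y_g=\pg g(x)$ and that $f^\circ(\pg f(x))=\Pf(y_f)$, etc.).

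I expect the only real obstacle to be the bookkeeping needed to \emph{legitimately apply} Lemma~\ref{SecondPgLemma} to $f+g$: one must verify that $f+g$ is nonzero at $x$, differentiable at $x$ (immediate from differentiability of $f$ and $g$, which follows from their polar differentiability via the converse direction of Lemma~\ref{SecondPgLemma}), that $x\in\int(\dom(f+g))=\RR^n$, and that $f+g$ is not linear on any $[0,tx]$ with $t>1$ — the last point because linearity there would force $\Jf[(f+g)]$ to be linear on the corresponding segment and would break the identity $(f+g)(x)\,(f+g)^{\star\circ}(x/(f+g)(x))=1$ used implicitly through Lemma~\ref{SecondPgLemma}. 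Each of these is routine given $f,g\in\Cvx_0(\RR^n)$ vanishing only at $0$, but they should be stated. Everything else is the short algebraic manipulation above; no second-order information is needed.
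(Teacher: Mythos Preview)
Your route is different from the paper's and, once a gap is closed, arguably cleaner. The paper does \emph{not} invoke the formula $\pg h(x)=\grad h(x)/h^\star(\grad h(x))$ for $h=f+g$. Instead it writes down the candidate $z$ (your $z^\circ$) and verifies the defining relation $(f+g)(x)\,(f+g)^\circ(z)=\langle x,z\rangle-1$ directly: the algebra gives $\langle x,z\rangle-1=(f+g)(x)\big(\Pf(y_f)^{-1}+g^\circ(y_g)^{-1}\big)^{-1}$, and the bound $(f+g)^\circ(z)\le\big(\Pf(y_f)^{-1}+g^\circ(y_g)^{-1}\big)^{-1}$ comes from the geometric inf-convolution formula (Lemma~\ref{lem-ginf}) applied to $(f+g)^\circ=\Pf\boxdot g^\circ$, the pair $(y_f,y_g)$ being feasible for $z$. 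Lemma~\ref{SecondPgLemma} is then used only for uniqueness.

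The gap in your argument is at the sentence ``so Lemma~\ref{SecondPgLemma} applies to $f+g$: it is polar differentiable at $x$.'' Lemma~\ref{SecondPgLemma} does not assert this: its forward direction says that if $h$ is differentiable and nonzero at $x$ then $\psg h(x)$ is \emph{either empty or} the singleton $\{\grad h(x)/h^\star(\grad h(x))\}$. You have not ruled out $\psg(f+g)(x)=\emptyset$. The conditions you list (nonzero, differentiable, not linear on $[0,x+]$) lead via Corollary~\ref{SecondPgCor} to $\psg(f+g)(x)=\del(f+g)^{\star\circ}(x/(f+g)(x))$, whose nonemptiness is still to be checked and is not what you argued.

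The fix is short and avoids Lemma~\ref{lem-ginf} altogether. With $v=\grad(f+g)(x)$ and $z^\circ=v/(f+g)^\star(v)$, Fenchel--Young for $f+g$ reads $\langle w,v\rangle\le(f+g)(w)+(f+g)^\star(v)$, i.e.\ $\langle w,z^\circ\rangle\le 1+(f+g)(w)/(f+g)^\star(v)$ for all $w$; hence by \eqref{eq:defpolaritytrans} $(f+g)^\circ(z^\circ)\le 1/(f+g)^\star(v)$. Testing at $w=x$ gives the reverse inequality via your identity $\langle x,z^\circ\rangle-1=(f+g)(x)/(f+g)^\star(v)$. Thus $(f+g)^\circ(z^\circ)=1/(f+g)^\star(v)$, so $z^\circ\in\psg(f+g)(x)$, and your algebra finishes the proof. (Equivalently: $(f+g)(x)\neq0$ forces $(f+g)^\star$ not linear on $[0,v+]$, and Lemma~\ref{lem:JDualEq} applied to $(f+g)^\star$ gives the same identity.) With this addition your argument is complete and bypasses the inf-convolution lemma the paper relies on.
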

 Note that $\pg (tf) (x)= \pg (f)(x)$ by the definition of the polar gradient. Thus we get the formula
 \[ \pg (f+tg) = 
\left(\frac{g^{\circ}(\pg g(x))}{t\Pf (\pg f (x))+g^{\circ}(\pg g(x))}\right) \pg f (x)
+\left(\frac{tf^{\circ}(\pg f(x))}{t\Pf (\pg f (x))+g^{\circ}(\pg g(x))}\right) \pg g(x). \]
 

\begin{proof}
As $f$ and $g$ are polar differentiable at $x$, by Lemma \ref{SecondPgLemma} they are both differentiable at $x$, hence so is $f+g$.
Denote $y_1 = \pg f (x)$, $y_2 = \pg g (x)$ and 
\[ z = y_1\frac{g^{\circ}(y_2)}{\Pf (y_1)+g^{\circ}(y_2)} + {y_2}\frac{f^{\circ}(y_1)}{\Pf (y_1)+g^{\circ}(y_2)}.\] 
We shall show that $z\in \psg (f+g)(x)$, and by Lemma \ref{SecondPgLemma} once again, get that $z = \pg (f+g)(x)$, as needed. 
By definition  
\[ f(x)\Pf (y_1) = \iprod{x}{y_1}-1, \qquad 
g(x)g^{\circ} (y_2) = \iprod{x}{y_2}-1.
\]
Since $x\in {\rm int}(\dom(f)\cap \dom(g)) = \RR^n$ we have that $\Pf (y_1)g^{\circ} (y_2)\neq 0$. Thus
\[f(x)+g(x) = 
\iprod{x}{\frac{y_1}{\Pf (y_1)} + \frac{y_2}{g^{\circ}(y_2)}}-\left( \frac{1}{\Pf(y_1)}+\frac{1}{g^{\circ}(y_2)}\right).
\]
Rearrange to get 
\[(f(x)+g(x))\left( {\Pf(y_1)}^{-1}+{g^{\circ}(y_2)}^{-1}\right)^{-1} = 
\iprod{x}{z}-1.
\]
Using Lemma \ref{lem-ginf} we have that 
\[ (f+g)^{\circ}(z) \le (f^{\circ}(y_1)^{-1}+g^{\circ}(y_2)^{-1})^{-1}\]
so that 
\[(f+g)(x)(f+g)^{\circ}(z)\le  
\iprod{x}{z}-1.
\]
The opposite inequality holds by the definition of $(fg)^{\circ}$, so we get equality, which means $z\in \psg(f+g)(x)$, as claimed. 
\end{proof}
Note that in the proof we obtained the formula
\[ (f+g)^{\circ}((1-\lambda) \pg f (x)
+\lambda \pg g(x)) =  (f^{\circ}(\pg f (x))^{-1}+g^{\circ}(\pg g(x))^{-1})^{-1}\]
for $\lambda = \frac{f^{\circ}(\pg f(x))}{\Pf (\pg f (x))+g^{\circ}(\pg g(x))}$, which is similar to a corresponding formula for inf-convolution.

\subsection*{Acknowledgements}

This research was supported by BSF grant 2012236. SAA was supported by 
ISF grant 247/11, and YAR by NSF grants DMS-0802923,1206284 and by a Sloan Research Fellowship.

\bigskip

\begin{spacing}{0}

\end{spacing}

\bigskip

{\sc Tel-Aviv University}

{\tt shiri@post.tau.ac.il}

\medskip

{\sc University of Maryland}

{\tt yanir@umd.edu}


\begin{thebibliography}{HHHH}


\bibitem{AM2} S. Artstein-Avidan, V. Milman, Hidden structures in the class
of convex functions and a new duality transform, J. Euro. Math. Soc. 13 (2011),
975--1004.

\bibitem{Fe} W. Fenchel, On conjugate convex functions,
Canad. J. Math. 1 (1949), 73--77.

\bibitem{Ma} S. Mandelbrojt, Sur les fonctiones convexes,
C. R. Acad. Sci. Paris 209 (1939), 977--978.

\bibitem{RT} J. Rauch, B.A. Taylor, The Dirichlet problem for
the multidimensional Monge-Amp\`ere equation,
Rocky Mountain J. Math. 7 (1977), 345--364.

\bibitem{Ro} R.T. Rockafellar, Convex analysis, Princeton University Press, 1970.

\bibitem{R} Y.A. Rubinstein, Geometric quantization and
dynamical constructions on the space of K\"ahler metrics, Ph.D.
Thesis, Massachusetts Institute of Technology, 2008.

\bibitem{S} S. Semmes, Interpolation of Banach spaces,
differential geometry and differential equations, Rev. Mat.
Iberoamericana 4 (1988), 155--176.

\end{thebibliography}
\end{document}